\theoremstyle{plain}
\newtheorem{theorem}{Theorem}[section]
\newtheorem{corollary}[theorem]{Corollary}
\newtheorem{lemma}[theorem]{Lemma}
\theoremstyle{remark}
\newtheorem*{remark}{Remark}
\newtheorem*{remarks}{Remarks}
\newcommand{\CC}{{\mathbb C}}
\newcommand{\RR}{{\mathbb R}}
\newcommand{\ZZ}{{\mathbb Z}}
\renewcommand{\Re}{\operatorname{Re}}
\renewcommand{\Im}{\operatorname{Im}}
\DeclareMathOperator{\HS}{\rm HS}
\DeclareMathOperator{\sign}{\rm sign}
\begin{document}

\title[Real and imaginary parts of powers of the Volterra operator]{On the real and imaginary parts of powers of the Volterra operator}

\date{}

\author[T. Ransford]{Thomas Ransford}
\address{D\'epartement de math\'ematiques et de statistique, Universit\'e Laval,
Qu\'ebec City (Qu\'ebec),  G1V 0A6, Canada}
\email[Corresponding author]{ransford@mat.ulaval.ca}

\author[D. Tsedenbayar]{Dashdondog Tsedenbayar}
\address{Department of Applied Mathematics, Mongolian University of Science and Technology,
P.O.\ Box 46/520, Ulaanbaatar, Mongolia}
\email{cdnbr@yahoo.com}

\thanks{First author supported by grants from NSERC and the Canada Research Chairs program.}

\begin{abstract}
We study  the real and imaginary
parts of the powers of the Volterra operator on $L^2[0,1]$,
specifically their eigenvalues, their norms and their numerical ranges.
\end{abstract}

\keywords{Volterra operator, adjoint, eigenvalue, operator norm, numerical range}

\makeatletter
\@namedef{subjclassname@2020}{\textup{2020} Mathematics Subject Classification}
\makeatother

\subjclass[2020]{Primary 47G10; Secondary 47A10, 47A12, 47A30}

\maketitle


\section{Introduction}\label{S:intro}

Let $V: L^2[0,1]\to L^2[0,1]$ be the  Volterra operator, defined by
\[
Vf(x):=\int_0^x f(t)\,dt.
\]
It is well known that $V$ is a compact  quasi-nilpotent operator,
whose is adjoint given by
\[
V^*f(x)=\int_x^1 f(t)\,dt \quad(f\in L^2[0,1]).
\]

The powers $V^n$ have been the object of numerous articles over the years. A simple proof by induction shows that, for all $n\ge1$ and all $f\in L^2[0,1]$, we have
\begin{equation}\label{E:powers}
V^n f(x) =\int_0^x \frac{(x-t)^{n-1}}{(n-1)!}f(t)\,dt
\quad\text{and}\quad
V^{*n} f(x) =\int_x^1 \frac{(t-x)^{n-1}}{(n-1)!}f(t)\,dt.
\end{equation}

Rather less seems to be known about the real and imaginary parts of these powers,
namely 
$\Re V^n:=(V^n+V^{*n})/2$ and
$\Im V^n:=(V^n-V^{*n})/2i$.
It follows from \eqref{E:powers} that, for all $f\in L^2[0,1]$,
\begin{equation}\label{E:Repowers}
\Re V^n f(x)=\frac{1}{2}\int_0^1 \frac{|x-t|^{n-1}}{(n-1)!}f(t)\,dt
\end{equation}
and
\begin{equation}\label{E:Impowers}
\Im V^n f(x)=\frac{1}{2i}\int_0^1 \frac{|x-t|^{n-1}\sign(x-t)^{n-1}}{(n-1)!}f(t)\,dt.
\end{equation}
The operators $\Re V^n$ and $\Im V^n$ form the subject of the present work,
in which we study their eigenvalues, their norms and their numerical ranges.


\section{Eigenvalues}\label{S:eigenvalues}

As remarked in the introduction, $V$ is a quasinilpotent operator, 
so $V^n$ can have no non-zero eigenvalues. In fact $V$ is injective, so
zero is not an eigenvalue of $V^n$ either.

The situation is quite different for $\Re V^n$ and $\Im V^n$. 
These are compact self-adjoint operators, so, 
as they are non-zero, they certainly have non-zero eigenvalues.
Moreover, since $V$ is a Hilbert--Schmidt operator, the
same is true for each of $\Re V^n$ and $\Im V^n$,
so their sets of eigenvalues are  square-summable sequences.
Our first result shows that, for certain values of $n$,
even more is true.

\begin{theorem}\label{T:finite}
If $n$ is odd, then $\Re V^n$ has at most $n$ non-zero eigenvalues.
If $n$ is even, then $\Im V^n$ has at most $n$ non-zero eigenvalues.
\end{theorem}

This result is an immediate consequence of the following lemma.

\begin{lemma}\label{L:finite}
Let $n\ge1$. 
The eigenvalues of $\Re (i^{n-1}V^n)$ are $0$, together with the  solutions $\lambda$ of
the equation $\det(A-\lambda B)=0$, 
where $A=(a_{jk})_{j,k=0}^{n-1}$ and $B=(b_{jk})_{j,k=0}^{n-1}$ are the $n\times n$ matrices given by
\[
a_{jk}:=\frac{i^{n-1}}{(j+n-k)!} 
\quad\text{and}\quad
b_{jk}:=
\begin{cases}
2/(j-k)!, & j\ge k,\\
0, &j<k.
\end{cases}
\]
\end{lemma}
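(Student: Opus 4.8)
The plan is to realise $T:=\Re(i^{n-1}V^n)$ as an integral operator with a polynomial kernel, from which it will be immediate that $T$ has finite rank with range in the space $P_{n-1}$ of polynomials of degree at most $n-1$; the spectral problem then becomes finite-dimensional, and the matrices $A$ and $B$ will drop out of the eigenvalue equation written in terms of the Taylor coefficients at $0$.

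First I would merge the two halves of the kernel. Since $\overline{i^{n-1}}=(-i)^{n-1}=(-1)^{n-1}i^{n-1}$ and $(t-x)^{n-1}=(-1)^{n-1}(x-t)^{n-1}$, the kernel $\tfrac{i^{n-1}}{(n-1)!}(x-t)^{n-1}$ of $i^{n-1}V^n$ (valid for $t\le x$) and the kernel $\tfrac{\overline{i^{n-1}}}{(n-1)!}(t-x)^{n-1}$ of $\overline{i^{n-1}}V^{*n}=(i^{n-1}V^n)^*$ (valid for $t\ge x$) both reduce to $\tfrac{i^{n-1}}{(n-1)!}(x-t)^{n-1}$; hence, by \eqref{E:powers},
\[
Tf(x)=\int_0^1 K(x,t)f(t)\,dt,\qquad K(x,t):=\frac{i^{n-1}}{2(n-1)!}(x-t)^{n-1}\quad\bigl((x,t)\in[0,1]^2\bigr).
\]
Because $(x-t)^{n-1}$ is a polynomial of degree $n-1$ in $x$, the range of $T$ lies in $P_{n-1}$, so $T$ has rank at most $n$; in particular $0$ is an eigenvalue of $T$, and any eigenfunction for a non-zero eigenvalue lies in $P_{n-1}$.

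The heart of the proof is to transcribe $Tf=\lambda f$ for $f\in P_{n-1}$. Write the Taylor expansion $f(t)=\sum_{r=0}^{n-1}(u_r/r!)t^r$, so that $u:=(u_0,\dots,u_{n-1})$ with $u_r=f^{(r)}(0)$ runs over $\CC^n$ and $u\ne0\iff f\ne0$. Since two polynomials of degree $\le n-1$ agree iff their derivatives of orders $0,\dots,n-1$ agree at the point $1$, the relation $Tf=\lambda f$ is equivalent to $(Tf)^{(k)}(1)=\lambda f^{(k)}(1)$ for $k=0,\dots,n-1$. Differentiating under the integral sign $k$ times gives $(Tf)^{(k)}(1)=\tfrac{i^{n-1}}{2(n-1-k)!}\int_0^1(1-t)^{n-1-k}f(t)\,dt$, and the Beta integral $\int_0^1(1-t)^{m}t^{r}\,dt=m!\,r!/(m+r+1)!$ turns this into $\tfrac{i^{n-1}}{2}\sum_{r=0}^{n-1}u_r/(r+n-k)!=\tfrac12\sum_{r=0}^{n-1}a_{rk}u_r$; differentiating the Taylor series termwise gives $f^{(k)}(1)=\sum_{r=0}^{n-1}u_r/(r-k)!=\tfrac12\sum_{r=0}^{n-1}b_{rk}u_r$, with the convention $1/j!=0$ for $j<0$ matching the definition of $B$. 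Hence $Tf=\lambda f$ is equivalent to $A^{\top}u=\lambda B^{\top}u$, i.e. to $(A-\lambda B)^{\top}u=0$.

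Assembling the pieces: for $\lambda\ne0$, $\lambda$ is an eigenvalue of $T$ iff there is a non-zero $u\in\CC^n$ with $(A-\lambda B)^{\top}u=0$, i.e. iff $\det(A-\lambda B)=\det(A-\lambda B)^{\top}=0$; combined with the eigenvalue $0$ found above, this is exactly the assertion. (Note that $B$ is lower triangular with diagonal entries $2$, so $\det(A-\lambda B)$ is a polynomial of degree exactly $n$ in $\lambda$, which is what makes Theorem~\ref{T:finite} follow.) The one delicate point I expect is the index bookkeeping in the previous paragraph: arranging the differentiation under the integral, the Beta evaluation, and hence the factorial $(r+n-k)!$, together with the factor $2$ and the transpositions, to match the stated $A$ and $B$ exactly; checking the cases $n=1$ and $n=2$ by hand is a cheap way to confirm that the conventions — in particular $1/j!=0$ for $j<0$ and the degenerate case $m=n-1-k=0$ — have been fixed consistently.
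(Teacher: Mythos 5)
Your proof is correct, and it reaches the same matrix pencil $(A-\lambda B)$ as the paper by a genuinely different mechanism at the key step. The paper starts from the eigenvalue equation $\tfrac{i^{n-1}}{2}(V^n+(-1)^{n-1}V^{*n})f=\lambda f$, bootstraps to $f\in C^\infty$, differentiates $n$ times to get the ODE $f^{(n)}=0$, and differentiates $k$ times at $x=1$ to get the boundary conditions $i^{n-1}V^{n-k}f(1)=2\lambda f^{(k)}(1)$; substituting $f(x)=\sum_j c_jx^j$ then yields $(A-\lambda B)v=0$ with $v_j=j!c_j$ (your $u_j=f^{(j)}(0)$). You instead merge the two halves of the kernel — your identity $\overline{i^{n-1}}(t-x)^{n-1}=i^{n-1}(x-t)^{n-1}$ is exactly right — so that $\Re(i^{n-1}V^n)$ is the integral operator with the single polynomial kernel $\tfrac{i^{n-1}}{2(n-1)!}(x-t)^{n-1}$ on all of $[0,1]^2$; finite rank with range in $P_{n-1}$ then replaces the regularity bootstrap and the ODE, eigenfunctions for $\lambda\ne0$ are automatically polynomials of degree at most $n-1$, and the eigenvalue $0$ comes for free (infinite-dimensional kernel) without invoking the spectral theorem as the paper does. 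Your reduction to the $n$ conditions $(Tf)^{(k)}(1)=\lambda f^{(k)}(1)$ is a clean two-way equivalence for $f\in P_{n-1}$, which makes the converse direction (every root of $\det(A-\lambda B)$ with $\lambda\ne0$ is an eigenvalue) explicit — a point the paper leaves somewhat implicit; your index bookkeeping (the Beta integral giving $1/(r+n-k)!$, the convention $1/j!=0$ for $j<0$, and the transpose) checks out against the stated $A$ and $B$. What the paper's route buys, by contrast, is the template that survives beyond this lemma: in the cases $\Re V^2$ and $\Im V$ (Theorems~\ref{T:ev1} and \ref{T:ev2}) the operators are not of finite rank and the bootstrap-plus-differential-equation-with-boundary-conditions method is the one that still applies, whereas the finite-rank shortcut is special to $\Re(i^{n-1}V^n)$.
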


\begin{proof}
Let $\lambda$ be a non-zero eigenvalue of $\Re (i^{n-1}V^n)$ with eigenvector $f$.
Then 
\begin{equation}\label{E:ev}
\frac{i^{n-1}}{2}(V^n+(-1)^{n-1}V^{*n})f=\lambda f.
\end{equation}
This expresses $f$ as the sum of integrals of $L^2$ functions,
so, changing $f$ on a set of measure zero, we may suppose that $f$ is continuous. 
Using \eqref{E:ev} to `bootstrap', we deduce that in fact $f\in C^\infty$.

Differentiating \eqref{E:ev} $n$ times, we obtain
\[
\lambda f^{(n)}=\frac{i^{n-1}}{2}(V^n f+(-1)^{n-1}V^{*n}f)^{(n)}=\frac{i^{n-1}}{2}(f+(-1)^{n-1}(-1)^nf)=0.
\]
As  $\lambda\ne0$, it follows that $f$ satisfies the differential equation
\begin{equation}\label{E:ode}
f^{(n)}=0.
\end{equation}
Also, differentiating \eqref{E:ev} $k$ times and then setting $x=1$, we obtain
the boundary conditions
\begin{equation}\label{E:bc}
i^{n-1}V^{n-k} f(1)=2\lambda f^{(k)}(1) \quad(k=0,1,\dots,n-1).
\end{equation}

The general solution of \eqref{E:ode} is 
\[
f(x)=\sum_{j=0}^{n-1} c_jx^j,
\]
where $c_0,c_1,\dots,c_{n-1}\in\mathbb{C}$.
Substituting this expression into \eqref{E:bc}, we obtain
\[
\sum_{j=0}^{n-1}\frac{i^{n-1}c_j}{(j+1)\cdots(j+n-k)}
=2\lambda \sum_{j=k}^{n-1} c_j j(j-1)\cdots(j-k+1) \quad(k=0,\dots,n-1).
\]
After simplification, this becomes
\[
\sum_{j=0}^{n-1}\frac{i^{n-1}}{(j+n-k)!}j!c_j=\lambda \sum_{j=k}^{n-1}\frac{2}{(j-k)!}j!c_j
\quad(k=0,\dots,n-1),
\]
in other words, $(A-\lambda B)v=0$, where $A,B$ are the matrices defined in the statement of the theorem,
and where $v$ is the vector with entries $v_j:=j!c_j~(j=0,\dots,n-1)$. The existence of a non-zero vector $v$
satisfying the equation $(A-\lambda B)v=0$ is equivalent to the condition that $(A-\lambda B)$ be a singular matrix,
 in other words, that $\det(A-\lambda B)=0$. 
We conclude that the non-zero eigenvalues of $\Re (i^{n-1}V^n)$ are precisely the solutions of the equation 
$\det(A-\lambda B)=0$.

Finally, we remark that, by the spectral theorem, a compact self-adjoint operator with only finitely many non-zero eigenvalues has a kernel of finite codimension. Thus $\Re (i^{n-1}V^n)$ has a kernel of finite codimension in $L^2[0,1]$.
In particular, the kernel is non-zero, and so zero is also an eigenvalue of $\Re (i^{n-1}V^n)$.
\end{proof}

For small values of $n$, Lemma~\ref{L:finite} permits us to calculate the eigenvalues explicitly. The results of these calculations are summarized in the following theorem.

\begin{theorem}\label{T:ev}
The non-zero eigenvalues of 
\begin{itemize}
\item $\Re V$ are $1/2$;
\item $\Im V^2$ are $\pm\sqrt{3}/12$;
\item $\Re V^3$ are $-1/24$ and $1/48\pm\sqrt{5}/80$;
\item $\Im V^4$ are $\pm\sqrt{1575\pm84\sqrt{345}}/5040$.
\end{itemize}
\end{theorem}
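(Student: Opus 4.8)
The plan is to apply Lemma~\ref{L:finite} directly for each of the four cases, taking $n=1,2,3,4$ in turn, and to recognize that the factor $i^{n-1}$ converts $\Re(i^{n-1}V^n)$ into exactly the operator whose eigenvalues are sought: for $n$ odd, $i^{n-1}=\pm1$ is real, so $\Re(i^{n-1}V^n)=\pm\Re V^n$ (in fact $i^{n-1}=1$ for $n=1$ and $i^{n-1}=-1$ for $n=3$, so one must track that sign), while for $n$ even, $i^{n-1}=\pm i$, so $\Re(i^{n-1}V^n)=\mp\Im V^n$. Thus in each case the nonzero eigenvalues of the target operator are obtained from the roots of $\det(A-\lambda B)=0$, possibly after multiplying $\lambda$ by $\pm1$ to undo the scalar normalization.

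First I would write down the matrices $A$ and $B$ explicitly for each $n$. For $n=1$ these are the $1\times1$ matrices $A=(1)$ (since $a_{00}=i^0/1!=1$) and $B=(2)$ (since $b_{00}=2/0!=2$), giving $\lambda=1/2$ immediately. For $n=2$, $A=\bigl(\begin{smallmatrix} i/2 & i \\ i/6 & i/2\end{smallmatrix}\bigr)$ and $B=\bigl(\begin{smallmatrix} 2 & 0 \\ 2 & 2\end{smallmatrix}\bigr)$; expanding $\det(A-\lambda B)$ yields a quadratic in $\lambda$ whose roots, after accounting for the $i^{n-1}=i$ normalization, should give $\pm\sqrt3/12$. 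For $n=3$ and $n=4$ the matrices are $3\times3$ and $4\times4$ respectively, and $\det(A-\lambda B)$ is a cubic, respectively quartic, polynomial in $\lambda$. The $n=3$ cubic should factor with one rational root $-1/24$ and a quadratic factor giving $1/48\pm\sqrt5/80$; the $n=4$ quartic should be even in $\lambda$ (reflecting the symmetry of $\Im V^4$ about $0$, since $\Im V^4$ is unitarily equivalent to $-\Im V^4$ via $f(x)\mapsto f(1-x)$), reducing to a quadratic in $\lambda^2$ with roots $\lambda^2=(1575\pm84\sqrt{345})/5040^2$.

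The main obstacle is purely computational: forming and expanding the $4\times4$ determinant $\det(A-\lambda B)$ for $n=4$ with the fractional entries $a_{jk}=i^3/(j+4-k)!$ and $b_{jk}$, clearing denominators, and then correctly identifying the resulting quartic as a quadratic in $\lambda^2$ and solving it. One must be careful with the $i^{n-1}$ bookkeeping: the determinant equation $\det(A-\lambda B)=0$ has complex coefficients when $n$ is even, but since $\Im V^n$ is self-adjoint its eigenvalues are real, so after the substitution the equation must reduce to a real polynomial in $\lambda$ — a useful consistency check. I would verify each case numerically (e.g.\ confirming $\sqrt3/12\approx0.1443$, $\sqrt5/80\approx0.02795$, and $\sqrt{1575+84\sqrt{345}}/5040\approx0.00987$) to catch arithmetic slips. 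I would also double-check that no spurious root has been introduced: the number of distinct nonzero eigenvalues listed ($1$, $2$, $3$, $4$) matches the bound $n$ from Theorem~\ref{T:finite}, and one should confirm that $\lambda=0$ is indeed a root of $\det(A-\lambda B)$ exactly when the listed nonzero eigenvalues do not already account for all $n$ roots — but here each polynomial has exactly $n$ nonzero roots, consistent with the kernel being accounted for separately.

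Finally, to present the argument cleanly I would treat $n=1$ in a sentence, do $n=2$ and $n=3$ with the intermediate polynomials displayed, and for $n=4$ display the reduced quartic $5040^2\lambda^4-3150\cdot5040\,\lambda^2+\text{(constant)}=0$ (with the actual coefficients to be computed) and apply the quadratic formula to $\lambda^2$. Each step is routine linear algebra; the only genuine care required is in the determinant expansion and in keeping the $\pm$ signs from $i^{n-1}$ and from the square roots straight.
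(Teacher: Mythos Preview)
Your proposal is correct and follows exactly the approach the paper intends: the paper's entire ``proof'' of Theorem~\ref{T:ev} is the sentence preceding it, namely that Lemma~\ref{L:finite} permits explicit calculation of the eigenvalues for small $n$, and you are simply carrying out those calculations. Your bookkeeping of the $i^{n-1}$ factor, the explicit matrices for $n=1,2$, and the observation that the $n=4$ quartic is even in $\lambda$ (hence a quadratic in $\lambda^2$) are all correct and are precisely the details the paper leaves to the reader.
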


What about the remaining cases, namely $\Re V^n$ when $n$ is even and $\Im V^n$ when $n$ is odd?
Even when $n=1,2$, these cases turn out to be more complicated.

\begin{theorem}\label{T:ev1}
The eigenvalues of $\Im V$ are $\displaystyle\Bigl\{\frac{1}{(2n+1)\pi}: n\in\ZZ\Bigr\}$.
\end{theorem}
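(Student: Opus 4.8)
The plan is to follow the pattern of the proof of Lemma~\ref{L:finite}, converting the eigenvalue equation into a differential equation together with boundary conditions; the difference is that here the differential equation is of first order and admits infinitely many admissible solutions. From the definition $\Im V=(V-V^*)/(2i)$ one has
\[
\Im V f(x)=\frac{1}{2i}\Bigl(\int_0^x f(t)\,dt-\int_x^1 f(t)\,dt\Bigr),
\]
so $\Im V$ has kernel $\frac{1}{2i}\sign(x-t)$. In particular $0$ is not an eigenvalue: if $\Im V f=0$ then, after modifying $f$ on a null set, $\int_0^x f=\int_x^1 f$ for all $x\in[0,1]$, and differentiating gives $f\equiv 0$.

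Next, let $\lambda\ne0$ be an eigenvalue with eigenvector $f$. As in the proof of Lemma~\ref{L:finite}, the identity $\Im V f=\lambda f$ exhibits $f$ as an integral of an $L^2$ function, so after modification on a null set $f$ is continuous, and bootstrapping shows $f\in C^\infty$. Differentiating $\frac{1}{2i}\bigl(\int_0^x f-\int_x^1 f\bigr)=\lambda f(x)$ once yields the first-order equation $f'=\frac{1}{i\lambda}f$, whence $f(x)=ce^{x/(i\lambda)}$ for some nonzero $c\in\CC$. Since differentiation discards one scalar constraint, I would then substitute this expression back into the undifferentiated equation at the single point $x=0$, namely $-\frac{1}{2i\lambda}\int_0^1 f=f(0)$; a short computation collapses this to $e^{1/(i\lambda)}=-1$. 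Hence $1/(i\lambda)=(2n+1)\pi i$ for some $n\in\ZZ$, that is $\lambda=-1/((2n+1)\pi)$, and since $\{2n+1:n\in\ZZ\}$ is invariant under negation this set coincides with $\{1/((2n+1)\pi):n\in\ZZ\}$. Conversely, a direct integration shows $\Im V\,e^{(2n+1)\pi i x}=-\tfrac{1}{(2n+1)\pi}\,e^{(2n+1)\pi i x}$, so every one of these numbers genuinely is an eigenvalue.

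I do not expect any real obstacle; the points needing care are (i) invoking the undifferentiated equation at one point so as not to over-count solutions, and (ii) the passage to $C^\infty$, which is routine and already appears in Lemma~\ref{L:finite}. One can also conclude more cleanly by noting that $\{e^{(2n+1)\pi i x}:n\in\ZZ\}$ is an orthonormal basis of $L^2[0,1]$---it is the standard Fourier basis of $L^2[0,1]$ multiplied by $e^{\pi i x}$---so $\Im V$ is unitarily diagonal with the eigenvalues just found; since eigenvectors of a self-adjoint operator belonging to distinct eigenvalues are orthogonal, there can be no further eigenvalue, and no kernel.
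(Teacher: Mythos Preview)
Your proof is correct and follows essentially the same approach the paper indicates (it does not write out the proof of Theorem~\ref{T:ev1}, but says it is ``similar, but simpler'' to that of Theorem~\ref{T:ev2}): rule out $\lambda=0$, bootstrap to $C^\infty$, differentiate to obtain an ODE, and recover the lost constant via a boundary condition. Your closing remark that $\{e^{(2n+1)\pi i x}:n\in\ZZ\}$ is an orthonormal basis is a pleasant alternative endgame, since it immediately rules out any further eigenvalue without needing the argument that $0$ is not an eigenvalue.
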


\begin{theorem}\label{T:ev2}
The eigenvalues of $\Re(V^2)$ are
\[
\Bigl\{\frac{1}{4t^2}:t>0,\,\frac{\coth t}{t}=1\Bigr\}\cup\Bigl\{-\frac{1}{4t^2}:t>0,\,\frac{\cot t}{t}=-1\Bigr\}\cup\Bigl\{-\frac{1}{(2n+1)^2\pi^2}:n\in\ZZ\Bigr\}.
\]
\end{theorem}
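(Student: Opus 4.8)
The plan is to convert the integral eigenvalue equation into a two‑point boundary value problem for a second‑order ODE, exactly as in the proof of Lemma~\ref{L:finite}, and then solve it by hand. By \eqref{E:Repowers} with $n=2$, a non‑zero eigenvalue $\lambda$ with eigenfunction $f$ satisfies
\[
\tfrac12\int_0^1|x-t|f(t)\,dt=\lambda f(x).
\]
Since the left‑hand side is (a multiple of) the integral of an $L^2$ function, the bootstrapping argument from Lemma~\ref{L:finite} lets us take $f\in C^\infty[0,1]$. Writing $g(x):=\int_0^1|x-t|f(t)\,dt=\int_0^x(x-t)f(t)\,dt+\int_x^1(t-x)f(t)\,dt$, one computes $g'(x)=\int_0^xf-\int_x^1f$ and $g''(x)=2f(x)$; as $g=2\lambda f$, this yields the ODE
\[
\lambda f''=f.
\]
Evaluating at the endpoints, $g'(0)=-\int_0^1f=-g'(1)$ and $g(0)+g(1)=\int_0^1f=g'(1)$, and dividing by $2\lambda$ gives the boundary conditions
\[
f'(0)+f'(1)=0,\qquad f(0)+f(1)=f'(1).
\]

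Conversely, I would verify that any non‑zero $f\in C^2[0,1]$ solving $\lambda f''=f$ with these two boundary conditions is an eigenfunction: the function $h:=g-2\lambda f$ satisfies $h''=2f-2\lambda f''=0$, so $h$ is affine; the first boundary condition forces its slope to vanish (using $g'(0)+g'(1)=0$ automatically and $f'(0)+f'(1)=0$), and then the second forces its constant term to vanish (using $h'(1)=0$, i.e.\ $\int_0^1f=2\lambda f'(1)$), so $h\equiv0$. The value $\lambda=0$ is excluded, since then $g\equiv0$, whence $f=\tfrac12g''\equiv0$. Thus the eigenvalues of $\Re(V^2)$ are exactly those $\lambda\ne0$ for which $\lambda f''=f$ admits a non‑zero solution with $f'(0)+f'(1)=0$ and $f(0)+f(1)=f'(1)$.

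It remains to solve this system, which is cleanest in the centred variable $s=x-\tfrac12$ (reflecting the symmetry $x\mapsto1-x$ of the kernel). If $\lambda>0$, write $\lambda=1/(4t^2)$ with $t>0$; the general solution is $f(s)=A\cosh(2ts)+B\sinh(2ts)$, the first boundary condition reads $4tB\cosh t=0$ hence $B=0$, and the second becomes $2\cosh t=2t\sinh t$, i.e.\ $\coth t/t=1$. If $\lambda<0$, write $\lambda=-1/(4t^2)$; then $f(s)=A\cos(2ts)+B\sin(2ts)$ and the first boundary condition reads $4tB\cos t=0$. When $\cos t\ne0$ this gives $B=0$ and the second condition becomes $\cot t/t=-1$; when $\cos t=0$, i.e.\ $t=(2n+1)\pi/2$, the second condition forces $A=0$, leaving the eigenfunction $\sin((2n+1)\pi s)$ and the eigenvalue $\lambda=-1/((2n+1)^2\pi^2)$. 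Assembling these three families gives precisely the set in the statement.

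The differentiations and the endpoint bookkeeping are routine; the step that needs genuine care is the converse direction — ensuring that no eigenvalue is lost when one passes from the integral equation to the ODE together with only two boundary conditions — which is exactly what the affine‑function argument for $h=g-2\lambda f$ accomplishes. A minor additional point worth stating cleanly is that the three families are pairwise disjoint, so the list is neither redundant nor incomplete.
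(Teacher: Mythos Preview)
Your proof is correct and follows essentially the same route as the paper's: reduce the integral equation to the ODE $\lambda f''=f$ with boundary conditions $f'(0)+f'(1)=0$ and $f(0)+f(1)=f'(1)$ (the paper writes the latter as $\tfrac12(f'(1)-f'(0))$, equivalent in view of the first), and then solve case by case---your centred variable $s=x-\tfrac12$ playing the same role as the paper's basis $e^{\omega x}\pm e^{\omega(1-x)}$. One genuine addition over the paper is your explicit converse via the affine argument for $h=g-2\lambda f$, which confirms that every solution of the ODE/BC system is indeed an eigenfunction of the integral operator; the paper leaves this direction implicit.
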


We carry out the proof just for Theorem~\ref{T:ev2}.  Theorem~\ref{T:ev1} is similar, but simpler.

\begin{proof}
Let $\lambda$ be an eigenvalue of $\Re V^2$ with eigenvector $f$.

We first show that $\lambda\ne0$.
Suppose, on the contrary, that $\frac{1}{2}(V^2+V^{*2})f=0$.
Since $Vf$ and $V^*f$ are continuous functions,
differentiation gives  $\frac{1}{2}(Vf-V^*f)=0$, in other words, $\int_0^x f=\int_x^1f$ for all $x\in [0,1]$.
This implies that $f=0$ a.e.\ on $[0,1]$, a contradiction. Thus $\lambda\ne0$, as claimed.

Since $\lambda$ is real and non-zero, it can be written as $\lambda=1/\omega^2$, where
$\omega$ is either real or pure imaginary. The eigenvalue condition then becomes
\begin{equation}\label{E:ev2}
f= \frac{\omega^2}{2}(V^2f+V^{*2}f).
\end{equation}
Using the bootstrap technique already mentioned in the proof of Theorem~\ref{T:ev}, 
we see that $f\in C^\infty$. 

Differentiating \eqref{E:ev2} twice leads to the differential equation
\begin{equation}\label{E:ode2}
f''=\omega^2 f.
\end{equation}
Also, differentiating \eqref{E:ev2} once, and setting $x=0,1$, and adding, 
we obtain the boundary condition
\begin{equation}\label{E:bc2a}
f'(0)+f'(1)=0.
\end{equation}
Lastly, setting $x=0,1$ in \eqref{E:ev2}, adding and simplifying, 
we obtain the second boundary condition
\begin{equation}\label{E:bc2b}
f(0)+f(1)=(1/2)(f'(1)-f'(0)).
\end{equation}

The general solution to the differential equation \eqref{E:ode2} can be written as 
\[
f(x)=A\Bigl(e^{\omega x}+e^{\omega(1-x)}\Bigr)+B\Bigl(e^{\omega x}-e^{\omega(1-x)}\Bigr),
\]
where $A,B$ are complex constants. Substituting this into the boundary condition \eqref{E:bc2a} gives
\[
2B(\omega+\omega e^{\omega})=0,
\]
and into \eqref{E:bc2b} yields
\[
2A(1+e^\omega)=-A(\omega-\omega e^\omega).
\]

Since $f\not\equiv 0$, at least one of $A,B$ is non-zero. If $B\ne0$, then $e^\omega=-1$, so $\omega=(2n+1)\pi i$
for some integer $n$. The corresponding  eigenvalue is
\[
\lambda=-\frac{1}{(2n+1)^2\pi^2}.
\]
If $A\ne0$, then $(1+e^\omega)=-(1/2)\omega(1-e^\omega)$, which can be rewritten as
\[
\frac{\coth(\omega/2)}{\omega/2}=1.
\]
If, further, $\omega$ is real, say $\omega=2t$, then
\[
\lambda=\frac{1}{4t^2} 
\quad\text{and}\quad 
\frac{\coth t}{t}=1.
\]
If, on the other hand, $\omega$ is purely imaginary, say $\omega=2it$, then
\[
\lambda=-\frac{1}{4t^2}
\quad\text{and}\quad 
\frac{\cot t}{t}=-1.
\]
This completes the list of eigenvalues of $\Re V^2$.
\end{proof}

We conclude this section with 
two general results about the distribution
of the eigenvalues of $\Re V^n$ and $\Im V^n$.

\begin{theorem}\label{T:sym}
For $n\ge1$, the eigenvalues of $\Im V^n$ are symmetrically distributed about $0$.
\end{theorem}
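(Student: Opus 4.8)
The plan is to exhibit a unitary (or antiunitary) symmetry of $L^2[0,1]$ that conjugates $\Im V^n$ to $-\Im V^n$, since any such equivalence forces the spectrum to be invariant under $\lambda\mapsto-\lambda$. The natural candidate is the reflection $J$ defined by $Jf(x):=f(1-x)$, which is a self-adjoint unitary involution on $L^2[0,1]$. First I would compute how $J$ interacts with $V$ and $V^*$: from $Vf(x)=\int_0^x f(t)\,dt$ one gets $JVJf(x)=\int_0^{1-x}f(1-t)\,dt=\int_x^1 f(s)\,ds=V^*f(x)$, so $JVJ=V^*$, and hence $JV^*J=V$. Raising to powers gives $JV^nJ=V^{*n}$ and $JV^{*n}J=V^n$.

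From this the computation is immediate:
\[
J(\Im V^n)J=J\,\frac{V^n-V^{*n}}{2i}\,J=\frac{V^{*n}-V^n}{2i}=-\Im V^n.
\]
Thus $\Im V^n$ is unitarily equivalent to its own negative, so its set of eigenvalues is invariant under $\lambda\mapsto-\lambda$; since the eigenvalues are real (as $\Im V^n$ is self-adjoint), this is exactly the assertion that they are symmetrically distributed about $0$. One should also note that the multiplicities are preserved, because $J$ restricts to a bijection between the $\lambda$-eigenspace and the $(-\lambda)$-eigenspace.

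There is essentially no obstacle here; the only point requiring a moment's care is the change of variables in the integral identifying $JVJ$ with $V^*$, and the bookkeeping of signs when passing to $V^n$ and $V^{*n}$ (one uses $JV^nJ=(JVJ)^n=V^{*n}$, valid since $J^2=I$). It is worth remarking that the analogous computation for $\Re V^n$ gives $J(\Re V^n)J=\Re V^n$, which is why no corresponding symmetry statement holds for the real parts — consistent with the asymmetric eigenvalue lists in Theorems~\ref{T:ev} and~\ref{T:ev2}.
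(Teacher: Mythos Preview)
Your proof is correct and follows exactly the same approach as the paper: conjugation by the reflection $Jf(x)=f(1-x)$ (called $U$ in the paper) takes $V$ to $V^*$ and hence $\Im V^n$ to $-\Im V^n$. Your version is more detailed (explicitly doing the change of variables and noting that multiplicities are preserved), but the idea is identical.
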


\begin{proof}
Let $U:L^2[0,1]\to L^2[0,1]$ be the unitary operator $Uf(x):=f(1-x)$.
A simple calculation shows that $V^*=U^{-1}VU$, whence $\Im V^n=-U^{-1}(\Im V^n) U$.
As $\Im V^n$ and $-\Im V^n$ are conjugate, they have the same eigenvalues.
This gives the result.
\end{proof}

The examples in Theorem~\ref{T:ev} and Theorem~\ref{T:ev2} show that there is no such symmetry result for $\Re V^n$.
Instead, we have the following theorem.

\begin{theorem}\label{T:Perron}
The eigenvalue of $\Re V^n$ of largest modulus is positive.
\end{theorem}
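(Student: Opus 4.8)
The plan is to exploit the positivity of the integral kernel of $\Re V^n$. Write $T:=\Re V^n$; by \eqref{E:Repowers}, $T$ is the integral operator on $L^2[0,1]$ with kernel
\[
K(x,t):=\frac{|x-t|^{n-1}}{2(n-1)!},
\]
which satisfies $K\ge0$ everywhere and $K(x,t)>0$ whenever $x\ne t$. As $T$ is compact, self-adjoint and non-zero, each of its eigenvalues $\lambda$ obeys $|\lambda|\le\|T\|$, with $\|T\|>0$; so it is enough to establish that \textup{(i)} $\|T\|$ is an eigenvalue of $T$, admitting a strictly positive eigenfunction, and \textup{(ii)} $-\|T\|$ is not an eigenvalue of $T$. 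Granted \textup{(i)} and \textup{(ii)}, the eigenvalue $\|T\|$ is the unique one of largest modulus, and it is positive.

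For \textup{(i)} I would use the familiar device of passing from a function to its modulus. Since $T$ is self-adjoint, $\|T\|=\sup_{\|f\|=1}|\langle Tf,f\rangle|$, and, $T$ being compact, this supremum is attained at some unit eigenvector $e$ of $T$. As $K\ge0$, $\langle T|e|,\,|e|\rangle\ge|\langle Te,e\rangle|=\|T\|$ while $\bigl\|\,|e|\,\bigr\|=1$, so $\langle T|e|,\,|e|\rangle=\|T\|$ and hence $\varphi:=|e|$ is a non-negative unit eigenfunction of $T$ for the eigenvalue $\|T\|$. Finally, for a.e.\ $x$,
\[
\|T\|\,\varphi(x)=\int_0^1 K(x,t)\varphi(t)\,dt>0,
\]
the strict inequality holding since $\varphi\ge0$ is not a.e.\ zero and $K(x,t)>0$ for $t\ne x$; thus $\varphi>0$ a.e., and in particular $\|T\|>0$.

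For \textup{(ii)} I would argue by contradiction. Suppose $T\psi=-\|T\|\psi$ with $\psi\ne0$. Using $K\ge0$ once more, $\|T\|\,|\psi|=|T\psi|\le T|\psi|$ pointwise a.e.; pairing this with $\varphi$ and using $\langle T|\psi|,\varphi\rangle=\langle|\psi|,T\varphi\rangle=\|T\|\langle|\psi|,\varphi\rangle$ together with $\varphi>0$, one is forced to have $T|\psi|=\|T\|\,|\psi|$ a.e. As in \textup{(i)}, this gives $|\psi|>0$ a.e. The equality
\[
\Bigl|\int_0^1 K(x,t)\psi(t)\,dt\Bigr|=\int_0^1 K(x,t)|\psi(t)|\,dt,
\]
which now holds for a.e.\ $x$ with $K(x,\cdot)>0$ a.e., is an equality case of the triangle inequality for an integral, and so forces $\psi$ to have constant argument a.e., say $\psi=e^{i\theta}|\psi|$. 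Then $T\psi=e^{i\theta}T|\psi|=\|T\|\,\psi$, contradicting $T\psi=-\|T\|\psi$; hence $-\|T\|$ is not an eigenvalue.

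The substance of the argument lies in step \textup{(ii)}, and the one point there requiring genuine care is the final implication: extracting, from the equality case of the triangle inequality for an integral, that $\psi$ has constant phase almost everywhere, while correctly accounting for the null diagonal on which $K$ vanishes. The remaining steps are routine. (Alternatively, \textup{(i)} and \textup{(ii)} together amount to an instance of the Perron--Frobenius/Krein--Rutman theory of compact positivity-improving operators, $T$ being positivity-improving precisely because $K>0$ a.e.\ on $[0,1]^2$; one could simply cite that.)
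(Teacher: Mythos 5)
Your argument is correct, but it takes a different (and more self-contained) route than the paper. The paper's proof is a one-liner: $\Re V^n$ is an integral operator with non-negative kernel by \eqref{E:Repowers}, so by the Krein--Rutman theorem its spectral radius is an eigenvalue, and since the operator is compact and self-adjoint the spectral radius equals the maximal modulus of the eigenvalues --- exactly the alternative you mention in your closing parenthesis. You instead prove the Perron--Frobenius-type statement from scratch: the modulus trick $\langle T|e|,|e|\rangle\ge|\langle Te,e\rangle|$ combined with the fact that $\langle Tf,f\rangle=\|T\|$ at a unit vector forces $Tf=\|T\|f$ (via positivity of $\|T\|I-T$) gives a non-negative, indeed a.e.\ strictly positive, eigenfunction for $\|T\|$; and your step (ii), using equality in the triangle inequality and the strict positivity of the kernel off the diagonal, rules out $-\|T\|$ as an eigenvalue. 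This buys you strictly more than the paper proves: you show the largest-modulus eigenvalue is unique (so the definite article in the theorem statement is fully justified, which the bare Krein--Rutman citation does not by itself settle), at the cost of a longer argument; the paper's citation is shorter but yields only that $+\|{\Re V^n}\|$ is an eigenvalue, without excluding $-\|{\Re V^n}\|$. All the delicate points in your write-up --- attainment of the numerical radius at an eigenvector, the passage from $\langle T\varphi,\varphi\rangle=\|T\|$ to $T\varphi=\|T\|\varphi$, the constancy of the phase of $\psi$ despite the null diagonal where the kernel vanishes --- are handled correctly.
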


\begin{proof}
As remarked in \eqref{E:Repowers}, for $f\in L^2[0,1]$ we have
\[
\Re V^n f(x)=\frac{1}{2}\int_0^1 \frac{|x-t|^{n-1}}{(n-1)!}f(t)\,dt.
\]
In particular, $\Re V^n$ is an integral operator on $L^2[0,1]$
with non-negative
kernel. By the Krein--Rutnam theorem (see e.g.\ \cite[Theorem~7.10]{AA02}), 
the spectral radius of such an operator is an eigenvalue. 
The result follows.
\end{proof}


\section{Norms}\label{S:norms}

According to an old result of Halmos \cite[Problem~188]{Ha82},
the operator norm of $V$ is 
$\|V\|=2/\pi$. The values of $\|V^2\|$ and $\|V^3\|$ are more complicated, but they
can be expressed in terms of solutions to certain equations 
(see e.g.\ \cite[p.1058]{BD09}).
For larger values of $n$, one has to be content with 
numerical computations or inequalities.
It is known that, for all $n\ge1$,
 the operator norm $\|V^n\|$ and Hilbert--Schmidt
norm $\|V^n\|_{\HS}$ satisfy
\begin{equation}\label{E:normineq}
\frac{1}{(n-1)!}\frac{1}{\sqrt{(2n+1)(2n-1)}}\le \|V^n\|\le \|V^n\|_{\HS}=\frac{1}{(n-1)!}\frac{1}{\sqrt{2n(2n-1)}}.
\end{equation}
Here the right-hand equality has been known since \cite{LW97}.
The left-hand inequality was proved in \cite[Theorem~1.2]{BD09}.
The latter article also contains an account of the  history
of this topic.

Our goal in this section is to establish analogous result for $\Re V^n$ and $\Im V^n$. Since each of these is a compact self-adjoint operator,
its operator norm is simply the maximum modulus of its eigenvalues.
The results about eigenvalues established in \S\ref{S:eigenvalues} 
therefore lead immediately to the values listed in Table~\ref{Tb:norms}.

\begin{table}[ht]
\caption{Operator norms of $\Re V^n$ and $\Im V^n$
\newline(Here $\rho\approx1.199678640\ldots$ is the solution to $\coth\rho=\rho$.)}
\label{Tb:norms}
\renewcommand{\arraystretch}{1.2}
\begin{center}
\begin{tabular}{|c|c|c|}\hline
$n$ & $\|\Re V^n\|$ & $\|\Im V^n\|$\\
\hline 
$1$ & $1/2$ & $1/\pi$\\
$2$ & $1/(4\rho^2)$ & $\sqrt{3}/12$\\
$3$ & $1/48+\sqrt{5}/80$ &--\\
$4$ &--& $(\sqrt{1575+84\sqrt{345}})/5040$\\
\hline
\end{tabular}
\end{center}
\renewcommand{\arraystretch}{1}
\end{table}

It is possible to continue further, using Theorem~\ref{T:ev},
but only at the cost of obtaining just numerical values. 

We now turn to the problem of finding the analogues of the
inequality \eqref{E:normineq} for $\Re V^n$ and $\Im V^n$.
Our first result treats their Hilbert--Schmidt norms,
which we can evaluate exactly.

\begin{theorem}\label{T:HS}
For all $n\ge1$, we have
\[
\|\Re V^n\|_{\HS}=\|\Im V^n\|_{\HS}
=\frac{1}{\sqrt{2}(n-1)!}\frac{1}{\sqrt{2n(2n-1)}}.
\]
\end{theorem}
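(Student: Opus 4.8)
The plan is to work entirely at the level of integral kernels. Recall that a Hilbert--Schmidt operator on $L^2[0,1]$ with kernel $K\in L^2([0,1]^2)$ has $\|\cdot\|_{\HS}$-norm equal to $\|K\|_{L^2([0,1]^2)}$. From \eqref{E:Repowers} and \eqref{E:Impowers} the kernels of $\Re V^n$ and $\Im V^n$ are $\tfrac12|x-t|^{n-1}/(n-1)!$ and $\tfrac1{2i}|x-t|^{n-1}\sign(x-t)^{n-1}/(n-1)!$. The key observation is that these have the same modulus almost everywhere on $[0,1]^2$: for $x\ne t$ one has $|\sign(x-t)^{n-1}|=1$ and $|1/(2i)|=1/2$, so both moduli equal $\tfrac12|x-t|^{n-1}/(n-1)!$. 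Hence $\|\Re V^n\|_{\HS}=\|\Im V^n\|_{\HS}$ at once, and it remains only to evaluate the common value.

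For that I would invoke the identity $\|\Re T\|_{\HS}^2+\|\Im T\|_{\HS}^2=\|T\|_{\HS}^2$, valid for every Hilbert--Schmidt operator $T$: expanding $\Re T=(T+T^*)/2$ and $\Im T=(T-T^*)/2i$ in the Hilbert--Schmidt inner product, the cross terms cancel and $\|T^*\|_{\HS}=\|T\|_{\HS}$. Taking $T=V^n$ and combining with the equality above gives $\|\Re V^n\|_{\HS}^2=\tfrac12\|V^n\|_{\HS}^2$, whereupon the stated formula follows from the value $\|V^n\|_{\HS}=\frac1{(n-1)!\sqrt{2n(2n-1)}}$ already recorded in \eqref{E:normineq}. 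Equally quickly, one can compute $\|\Re V^n\|_{\HS}^2=\frac1{4((n-1)!)^2}\int_0^1\!\int_0^1|x-t|^{2n-2}\,dx\,dt$ and evaluate the double integral, which equals $\frac1{n(2n-1)}$ by symmetry, reducing it to $2\int_0^1\!\int_0^x(x-t)^{2n-2}\,dt\,dx$.

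There is no genuine obstacle here; the only things meriting a word of care are the almost-everywhere statement that $\sign(x-t)^{n-1}$ has modulus one — which also takes care of the degenerate case $n=1$, where this factor is an empty power — and keeping track of the powers of $2$ when matching the result to the form displayed in the statement.
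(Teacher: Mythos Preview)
Your proposal is correct and follows essentially the same approach as the paper: compute the Hilbert--Schmidt norm as the $L^2$-norm of the kernel, observe that the two kernels have the same modulus a.e., and evaluate the resulting double integral $\int_0^1\!\int_0^1 |x-t|^{2n-2}\,dx\,dt=\tfrac{1}{n(2n-1)}$. Your alternative route via the identity $\|\Re T\|_{\HS}^2+\|\Im T\|_{\HS}^2=\|T\|_{\HS}^2$ is exactly the content of the remark the paper places immediately after the proof.
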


\begin{proof}
As already remarked in \eqref{E:Repowers} and \eqref{E:Impowers},
for $f\in L^2[0,1]$ we have
\[
\Re V^n f(x)=\frac{1}{2}\int_0^1 \frac{|x-t|^{n-1}}{(n-1)!}f(t)\,dt
\]
and
\[
\Im V^n f(x)=\frac{1}{2i}\int_0^1 \frac{|x-t|^{n-1}\sign(x-t)^{n-1}}{(n-1)!}f(t)\,dt.
\]
Since the Hilbert--Schmidt norm of an integral operator on $L^2[0,1]$ is  just the $L^2$-norm of its kernel on $[0,1]^2$, it follows that
\[
\|\Re V^n\|_{\HS}^2=\|\Im V^n\|_{\HS}^2=
\frac{1}{4(n-1)!^2}\int_0^1\int_0^1 (x-t)^{2n-2}\,dt\,dx.
\]
Using the binomial theorem, it is easy to check that
\begin{equation}\label{E:doubleint}
\int_0^1\int_0^1 (x-t)^{2n-2}\,dt\,dx=\frac{1}{n(2n-1)}.
\end{equation}
The result follows.
\end{proof}

\begin{remark}
Comparing the result of Theorem~\ref{T:HS} with \eqref{E:normineq},
we see that 
\begin{equation}\label{E:parallelogram}
\|\Re V^n\|_{\HS}=\|\Im V^n\|_{\HS}=\frac{\|V^n\|_{\HS}}{\sqrt{2}}.
\end{equation}
This could also be seen more directly, as follows. 
Since the Hilbert--Schmidt norm is a Hilbert-space norm, 
it satisfies the parallelogram identity, so
\[
\|V^n+V^{*n}\|_{\HS}^2+\|V^n-V^{*n}\|_{\HS}^2
=2\|V^n\|_{\HS}^2+2\|V^{*n}\|_{\HS}^2,
\]
in other words,
\[
2\|\Re V^n\|_{\HS}^2+2\|\Im V^n\|_{\HS}^2
=\|V^n\|_{\HS}^2+\|V^{*n}\|_{\HS}^2.
\]
Also, as the kernels of $\Re V^n$ and $\Im V^n$ have the same
absolute value a.e., we have $\|\Re V^n\|_{\HS}=\|\Im V^n\|_{\HS}$.
Lastly, since $V$ and $V^*$ are unitarily equivalent, we have $\|V^n\|_{\HS}=\|V^{*n}\|_{\HS}$.
The identity \eqref{E:parallelogram} follows immediately from these observations.
\end{remark}

Now we turn our attention to the operator norms 
$\|\Re V^n\|$ and $\|\Im V^n\|$.
Our goal is to establish the following theorem.

\begin{theorem}\label{T:opnorm}
For each $n\ge1$, we have
\[
\frac{1}{\sqrt{2}(n-1)!}\frac{1}{\sqrt{2n(2n+1)}}
\le \|\Re V^n\|\le 
\frac{1}{\sqrt{2}(n-1)!}\frac{1}{\sqrt{2n(2n-1)}}
\]
and
\[
\frac{1}{\sqrt{2}(n-1)!}\frac{1}{\sqrt{2n(2n+1)}}\Bigl(1-\binom{2n-2}{n-1}^{-1}\Bigr)
\le \|\Im V^n\|\le 
\frac{1}{\sqrt{2}(n-1)!}\frac{1}{\sqrt{2n(2n-1)}}.
\]
\end{theorem}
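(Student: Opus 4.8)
The upper bounds are immediate: for any Hilbert--Schmidt operator $T$ one has $\|T\|\le\|T\|_{\HS}$, so applying this with $T=\Re V^n$ and $T=\Im V^n$ and quoting Theorem~\ref{T:HS} gives the right-hand inequalities at once. The rest of the plan concerns the lower bounds.

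The main tool would be the elementary inequality valid for a compact self-adjoint $T$ with eigenvalue sequence $(\lambda_j)$: since $\lambda_j^4\le\|T\|^2\lambda_j^2$ for every $j$, summation gives $\operatorname{tr}(T^4)\le\|T\|^2\operatorname{tr}(T^2)$, i.e.\ $\|T\|^2\ge\operatorname{tr}(T^4)/\operatorname{tr}(T^2)$. The denominators $\operatorname{tr}((\Re V^n)^2)=\|\Re V^n\|_{\HS}^2$ and $\operatorname{tr}((\Im V^n)^2)=\|\Im V^n\|_{\HS}^2$ are already evaluated in Theorem~\ref{T:HS}, so everything reduces to the fourth-power traces. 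By \eqref{E:Repowers} the operator $\Re V^n$ has kernel $|x-t|^{n-1}/(2(n-1)!)$, and by \eqref{E:Impowers} the operator $\Im V^n$ has kernel $\sign(x-t)\,|x-t|^{n-1}/(2i(n-1)!)$, so these traces are the $4$-fold integrals over $[0,1]^4$
\[
\operatorname{tr}\bigl((\Re V^n)^4\bigr)=\frac{1}{16(n-1)!^4}\int_{[0,1]^4}\prod_{i=1}^{4}|x_i-x_{i+1}|^{\,n-1}\,dx
\]
(indices read mod $4$), while $\operatorname{tr}((\Im V^n)^4)$ is the same $4$-fold integral carrying the extra weight $\prod_{i=1}^4\sign(x_i-x_{i+1})$, the four factors $1/(2i)$ having multiplied to $1/16$.

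To evaluate these integrals I would split $[0,1]^4$ according to the relative order of the four coordinates. Up to the dihedral symmetry of the index cycle $1\to2\to3\to4\to1$ there are exactly three order types; writing $a,b,c$ for the three consecutive gaps of the order statistics, the four cycle-edge lengths are then $\{a,b,c,a+b+c\}$, or $\{a,a+b,b+c,c\}$, or $\{b,a+b,b+c,a+b+c\}$, each type arising from $8$ of the $24$ orderings, and on them the weight $\prod\sign(x_i-x_{i+1})$ equals $-1$, $+1$, $+1$ respectively. Integrating out the location of the smallest coordinate supplies a factor $1-(a+b+c)$, and rescaling the gaps turns each type's contribution into a fixed constant times the integral over the $2$-simplex $\Delta=\{\alpha,\beta>0,\ \alpha+\beta<1\}$ of the corresponding product. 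One obtains, with common prefactor $1/(8n(4n-1)(n-1)!^4)$,
\[
\operatorname{tr}\bigl((\Re V^n)^4\bigr)=\frac{A_n+C_n+D_n}{8n(4n-1)(n-1)!^4},\qquad
\operatorname{tr}\bigl((\Im V^n)^4\bigr)=\frac{C_n+D_n-A_n}{8n(4n-1)(n-1)!^4},
\]
where $A_n=\int_\Delta[\alpha\beta(1-\alpha-\beta)]^{n-1}=(n-1)!^3/(3n-1)!$ (an exact Dirichlet integral), $C_n=\int_\Delta[\alpha(1-\alpha)\beta(1-\beta)]^{n-1}$, and $D_n=\int_\Delta[(1-\alpha)(1-\beta)(1-\alpha-\beta)]^{n-1}$.

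Finally, since $(1-\alpha)(1-\beta)\ge1-\alpha-\beta$ on $\Delta$ one has $C_n\ge A_n$ and $D_n\ge\int_\Delta(1-\alpha-\beta)^{2n-2}=1/(2n(2n-1))$, giving explicit lower bounds for $A_n+C_n+D_n$ and for $C_n+D_n-A_n$; inserting these, together with the value of $\operatorname{tr}(T^2)$ from Theorem~\ref{T:HS}, into $\|T\|^2\ge\operatorname{tr}(T^4)/\operatorname{tr}(T^2)$ yields the two lower bounds, the need to subtract the comparatively minuscule term $A_n=(n-1)!^3/(3n-1)!$ in the imaginary case being exactly what produces a correction factor of the shape $1-\binom{2n-2}{n-1}^{-1}$. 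I expect the main obstacle to be the combinatorial bookkeeping of the middle step — correctly enumerating the three order types, attaching the right sign $\prod\sign(x_i-x_{i+1})$ to each, and carrying through the rescalings — and then checking that the crude simplex estimates are in fact sharp enough to reach the stated constants; since the $\operatorname{tr}(T^4)$ lower bound is asymptotically tight for these operators, that last point needs some care.
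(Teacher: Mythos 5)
Your upper bounds are exactly the paper's, and your lower-bound tool is too: $\|T\|^2\ge\operatorname{tr}(T^4)/\operatorname{tr}(T^2)$ is just the square of the inequality $\|T\|\ge\|T^2\|_{\HS}/\|T\|_{\HS}$ in \eqref{E:genineq}, so at bottom you follow the same strategy, the new ingredient being the exact order-type evaluation of the fourth-power traces. That part of your plan is correct and checks out in detail: the three cyclic order types with signs $-1,+1,+1$, the common prefactor $1/(8n(4n-1)(n-1)!^4)$, and $A_n=(n-1)!^3/(3n-1)!$ are all right; for instance at $n=2$ they give $\operatorname{tr}\bigl((\Re V^2)^4\bigr)=41/40320$ and $\operatorname{tr}\bigl((\Im V^2)^4\bigr)=1/1152$, in agreement with the eigenvalue lists of Theorems~\ref{T:ev} and~\ref{T:ev2}.

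The gap is the closing step, and it is fatal to this route. From $C_n\ge A_n$ and $D_n\ge 1/(2n(2n-1))$ you obtain only $\|\Re V^n\|\ge\frac{1}{\sqrt{2}(n-1)!}\frac{1}{\sqrt{2n(4n-1)}}$ (and essentially the same for $\Im V^n$), which is strictly weaker than the stated $\frac{1}{\sqrt{2}(n-1)!}\frac{1}{\sqrt{2n(2n+1)}}$ for every $n\ge2$ since $4n-1>2n+1$; already at $n=3$ it also fails to imply the stated $\Im$ bound. Moreover, sharpening the simplex estimates cannot rescue the argument: at $n=2$ the exact values $A_2=1/120$, $C_2=1/72$, $D_2=11/120$ give $\operatorname{tr}(T^4)/\operatorname{tr}(T^2)=41/1680$ for $T=\Re V^2$, whose square root $\approx0.1562$ lies \emph{below} the stated lower bound $1/\sqrt{40}\approx0.1581$; so the inequality $\|T\|^2\ge\operatorname{tr}(T^4)/\operatorname{tr}(T^2)$ cannot yield the constant $\sqrt{2n(2n+1)}$ at $n=2$, no matter how accurately the traces are computed (the statement itself is safe there, since $\|\Re V^2\|=1/(4\rho^2)\approx0.174$). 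Your own caveat about the tightness of the $\operatorname{tr}(T^4)$ bound is precisely where the proof dies. Incidentally, since the paper's proof rests on the very same inequality \eqref{E:genineq}, your exact computation has a consequence for it as well: it shows $\|(\Re V^2)^2\|_{\HS}=\sqrt{41/40320}\approx0.0319$, which is smaller than the bound $\tfrac{1}{6\sqrt{20}}\approx0.0373$ asserted by Lemma~\ref{L:sq}; the step $\bigl(\int_0^x+\int_y^1\bigr)\cdots=2\int_0^x\cdots$ there needs $\int_y^1\ge\int_0^x$, which fails when $x+y>1$. So a complete proof of the theorem as stated requires either a different method or a weakened constant; your $\sqrt{2n(4n-1)}$ bound is, as far as I can see, correct and fully provable by your argument.
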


To prove this result, we adopt a similar strategy
to that used in \cite{BD09} to obtain  the bounds in
\eqref{E:normineq}. The idea is to estimate the operator
norm $\|S\|$ of a self-adjoint Hilbert--Schmidt operator $S$ using the
elementary general inequalities
\begin{equation}\label{E:genineq}
\frac{\|S^2\|_{\HS}}{\|S\|_{\HS}}\le\|S\|\le\|S\|_{\HS}.
\end{equation}

We have already evaluated 
$\|\Re V^n\|_{\HS}$ and $\|\Im V^n\|_{\HS}$. 
It remains to estimate
$\|(\Re V^n)^2\|_{\HS}$ and $\|(\Im V^n)^2\|_{\HS}$.
In view of \eqref{E:genineq}, we seek lower bounds.
We start with $\|(\Re V^n)^2\|_{\HS}$.

\begin{lemma}\label{L:sq}
For all $n\ge1$, we have
\[
\|(\Re V^n)^2\|_{\HS}
\ge \frac{1}{2(n-1)!^2}\frac{1}{(2n-1)\sqrt{2n(2n+1)}}.
\]
\end{lemma}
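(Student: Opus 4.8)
The plan is to pass to the trace and exploit the algebra of $V^n$ and $V^{*n}$. Since $\Re V^n$ is self-adjoint and Hilbert--Schmidt, $\|(\Re V^n)^2\|_{\HS}^2=\operatorname{tr}\bigl((\Re V^n)^4\bigr)$, and writing $\Re V^n=\tfrac12(V^n+V^{*n})$ gives
\[
\|(\Re V^n)^2\|_{\HS}^2=\frac1{16}\operatorname{tr}\bigl((V^n+V^{*n})^4\bigr),
\]
which I would expand as a sum over the sixteen words of length four in the letters $V^n$ and $V^{*n}$.

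The expansion collapses. The two \emph{pure} words $V^{4n}$ and $V^{*4n}$ contribute nothing: $V^{4n}$ is trace class and quasinilpotent, so $\operatorname{tr}(V^{4n})=0$ by Lidskii's theorem (alternatively, by \eqref{E:powers} its kernel is supported on $\{t<x\}$ and so vanishes on the diagonal), and likewise $\operatorname{tr}(V^{*4n})=0$. For the remaining fourteen words I would use cyclic invariance of the trace together with $\operatorname{tr}(M^*)=\overline{\operatorname{tr}(M)}$, which lets one pair each word with the conjugate of its reverse; sorting into cyclic orbits then yields
\[
\operatorname{tr}\bigl((V^n+V^{*n})^4\bigr)=8\operatorname{tr}(V^{3n}V^{*n})+4\operatorname{tr}(V^{2n}V^{*2n})+2\operatorname{tr}\bigl((V^nV^{*n})^2\bigr).
\]

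Each surviving term is then evaluated or estimated. Since $V^{*n}=(V^n)^*$, we have $\operatorname{tr}(V^{3n}V^{*n})=\langle V^{3n},V^n\rangle_{\HS}$ and $\operatorname{tr}(V^{2n}V^{*2n})=\|V^{2n}\|_{\HS}^2$; inserting the kernels from \eqref{E:powers} reduces the first to the elementary integral $\frac1{(3n-1)!\,(n-1)!}\int_0^1\!\int_0^x(x-t)^{4n-2}\,dt\,dx$, while the second is the known value of $\|V^{2n}\|_{\HS}^2$, and one obtains $\operatorname{tr}(V^{3n}V^{*n})=\bigl((3n-1)!\,(n-1)!\,4n(4n-1)\bigr)^{-1}$ and $\operatorname{tr}(V^{2n}V^{*2n})=\bigl((2n-1)!^2\,4n(4n-1)\bigr)^{-1}$. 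The remaining term is non-negative, since $\operatorname{tr}\bigl((V^nV^{*n})^2\bigr)=\|V^nV^{*n}\|_{\HS}^2\ge0$; where a sharper estimate is required, it can be bounded below by $\|V^nV^{*n}\|^2=\|V^n\|^4$ and then by the left-hand inequality of \eqref{E:normineq}. Assembling the pieces and simplifying the resulting rational expression should produce the stated bound.

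The step I expect to be the main obstacle is this final simplification. One must carry the word-counting in the expansion through accurately, and then manipulate $8\operatorname{tr}(V^{3n}V^{*n})+4\operatorname{tr}(V^{2n}V^{*2n})$ — together with whatever portion of $\operatorname{tr}((V^nV^{*n})^2)$ is kept — into exactly the asserted closed form. The tools for this are the factorial identities $(2n-1)!=(2n-1)(2n-2)!$ and $(2n-2)!=\binom{2n-2}{n-1}(n-1)!^2$, which convert between $(2n-1)!$ and powers of $(n-1)!$; I would run the computation over a single common denominator to keep the bookkeeping transparent.
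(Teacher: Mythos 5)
Your trace expansion is correct, and in fact it is an \emph{exact} evaluation: since $\operatorname{tr}(V^{4n})=\operatorname{tr}(V^{*4n})=0$, cyclicity of the trace gives
\[
\|(\Re V^n)^2\|_{\HS}^2=\tfrac{1}{16}\Bigl(8\operatorname{tr}(V^{3n}V^{*n})+4\operatorname{tr}(V^{2n}V^{*2n})+2\operatorname{tr}\bigl((V^nV^{*n})^2\bigr)\Bigr),
\]
and your values $\operatorname{tr}(V^{3n}V^{*n})=\bigl((3n-1)!\,(n-1)!\,4n(4n-1)\bigr)^{-1}$ and $\operatorname{tr}(V^{2n}V^{*2n})=\bigl((2n-1)!^2\,4n(4n-1)\bigr)^{-1}$ are right. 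The gap is exactly the step you flagged: the final assembly cannot produce the stated bound, and not because of bookkeeping. With your estimate $\operatorname{tr}((V^nV^{*n})^2)\ge\|V^n\|^4\ge\bigl((n-1)!^4(2n+1)^2(2n-1)^2\bigr)^{-1}$, the right-hand side of the identity is roughly half of the required quantity $\frac{1}{4(n-1)!^4(2n-1)^2\,2n(2n+1)}$ (the ratio needed/obtained is $(2n+1)/n$, and the first two traces are smaller by factors like $\binom{2n-2}{n-1}^{-2}$, so they cannot close the deficit). Worse, because the identity is exact you can test the target: for $n=2$ the kernel of $V^2V^{*2}$ is $\frac{x^2y}{2}-\frac{x^3}{6}$ on $\{x\le y\}$, giving $\operatorname{tr}((V^2V^{*2})^2)=11/1680$, hence $\|(\Re V^2)^2\|_{\HS}^2=\frac{1}{16}\bigl(\frac{8}{6720}+\frac{4}{2016}+\frac{22}{1680}\bigr)=\frac{41}{40320}\approx 0.00102$, which is strictly \emph{below} the square of the asserted bound, $\frac{1}{720}\approx 0.00139$; the value $41/40320$ is confirmed independently by summing the fourth powers of the eigenvalues of $\Re V^2$ from Theorem~\ref{T:ev2}. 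Since moreover $\operatorname{tr}((V^nV^{*n})^2)\le\|V^n\|_{\HS}^4$ while the other two traces are negligible, the same comparison shows the asserted inequality fails for every $n\ge2$, by a factor tending to $\sqrt2$ in the norm. So no sharper lower bound on the third trace can rescue your final step.

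For comparison, the paper proves the lemma quite differently, by bounding the kernel $K_n(x,y)=\frac{1}{4(n-1)!^2}\int_0^1|x-t|^{n-1}|t-y|^{n-1}\,dt$ from below pointwise on $\{x\le y\}$ and integrating; but that argument rests on the identity $\bigl(\int_0^x+\int_y^1\bigr)|x-t|^{n-1}|t-y|^{n-1}\,dt=2\int_0^x(x-t)^{n-1}(y-t)^{n-1}\,dt$, which holds only when $x+y=1$ (already for $n=1$ the two sides are $x+(1-y)$ and $2x$), so the discrepancy your computation uncovers is genuine rather than an artifact of your method. In other words, your approach is sound and, carried to completion, it refutes the stated inequality instead of proving it. The constructive salvage is to record what your identity actually yields, e.g.\ $\|(\Re V^n)^2\|_{\HS}\ge\frac{1}{2\sqrt2}\|V^n\|^2$ from the term $2\operatorname{tr}((V^nV^{*n})^2)\ge 2\|V^n\|^4$, and then feed that into \eqref{E:genineq}; this gives a valid, though weaker, lower bound for $\|\Re V^n\|$ of the order $\|V^n\|^2/\bigl(2\|V^n\|_{\HS}\bigr)$, rather than the constant claimed in Theorem~\ref{T:opnorm}.
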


\begin{proof}
Using Fubini's theorem, we see that $(\Re V^n)^2$ is the
integral operator on $[0,1]$ with kernel
\[
K_n(x,y):=\frac{1}{4(n-1)!^2}\int_0^1 |x-t|^{n-1}|t-y|^{n-1}\,dt.
\]
The Hilbert--Schmidt norm of $(\Re V^n)^2$ is therefore the
$L^2$-norm of this kernel on $[0,1]^2$. Suppose that $0\le x\le y\le1$.
Then we have
\begin{align*}
\int_0^1|x-t|^{n-1}|t-y|^{n-1}\,dt
&\ge\Bigl(\int_0^x+\int_y^1\Bigr)|x-t|^{n-1}|t-y|^{n-1}\,dt\\
&= 2\int_0^x(x-t)^{n-1}(y-t)^{n-1}\,dt\\
&=2\int_0^1 (x-sx)^{n-1}(y-sx)^{n-1}x\,ds\\
&\ge2\int_0^1 (x-sx)^{n-1}(y-sy)^{n-1}x\,ds\\
&=\frac{2x^ny^{n-1}}{2n-1}.
\end{align*}
Hence
\begin{align*}
\|(\Re V^n)^2\|_{HS}^2
&=\int_0^1\int_0^1 K_n(x,y)^2\,dx\,dy\\
&=2\int_0^1\int_0^y K_n(x,y)^2\,dx\,dy\\
&\ge \frac{2}{16(n-1)!^4}\int_0^1\int_0^y\Bigl(\frac{2x^ny^{n-1}}{2n-1}\Bigr)^2\,dx\,dy\\
&=\frac{2}{16(n-1)!^4}\frac{4}{(2n+1)4n(2n-1)^2}\\
&=\frac{1}{4(n-1)!^4}\frac{1}{(2n+1)2n(2n-1)^2}.\qedhere
\end{align*}
\end{proof}

We cannot quite repeat this proof for $(\Im V^n)^2$,
since the corresponding kernel is given by an integral of
a function that is not positive everywhere. Nor can we assert
that $\|(\Re V^n)^2\|_{\HS}=\|(\Im V^n)^2\|_{\HS}$,
as was the case for the first powers of $\Re V^n$ and $\Im V^n$.
However, it turns out that $(\Re V^n)^2$ is extremely close
to $(\Im V^n)^2$ in the Hilbert--Schmidt norm, and this is good enough for our needs.

\begin{lemma}\label{L:sqdiff}
For all $n\ge1$, we have
\[
\|(\Re V^n)^2-(\Im V^n)^2\|_{\HS}
=\frac{1}{2(2n-1)!}\frac{1}{\sqrt{2n(4n-1)}}.
\]
\end{lemma}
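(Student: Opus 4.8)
The plan is to compute the Hilbert--Schmidt norm of the difference operator directly by identifying its kernel and integrating. First I would observe that, writing $R_n:=\Re V^n$ and $I_n:=\Im V^n$, both are self-adjoint, so $R_n^2$ and $I_n^2$ are integral operators whose kernels I can obtain via Fubini. From \eqref{E:Repowers} and \eqref{E:Impowers}, the kernel of $R_n^2$ is
\[
K_n(x,y)=\frac{1}{4(n-1)!^2}\int_0^1 |x-t|^{n-1}|t-y|^{n-1}\,dt,
\]
while the kernel of $I_n^2$ is
\[
L_n(x,y)=-\frac{1}{4(n-1)!^2}\int_0^1 |x-t|^{n-1}|t-y|^{n-1}\sign(x-t)^{n-1}\sign(t-y)^{n-1}\,dt,
\]
the extra minus sign coming from $1/i^2=-1$. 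Subtracting, the kernel of $R_n^2-I_n^2$ is
\[
M_n(x,y)=\frac{1}{4(n-1)!^2}\int_0^1 |x-t|^{n-1}|t-y|^{n-1}\bigl(1+\sign(x-t)^{n-1}\sign(t-y)^{n-1}\bigr)\,dt.
\]
The key simplification is that the bracketed factor is $0$ unless $\sign(x-t)^{n-1}\sign(t-y)^{n-1}=1$, in which case it is $2$. When $n$ is odd this never cancels and $M_n=2K_n$; but the more useful uniform observation is that $\sign(x-t)^{n-1}\sign(t-y)^{n-1}=1$ exactly when $(x-t)$ and $(t-y)$ have the same sign (for $n$ even) or always (for $n$ odd), i.e.\ precisely when $t$ does \emph{not} lie strictly between $x$ and $y$. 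Hence, for $0\le x\le y\le 1$,
\[
M_n(x,y)=\frac{2}{4(n-1)!^2}\Bigl(\int_0^x+\int_y^1\Bigr)(x-t)^{n-1}(t-y)^{n-1}\,dt
=\frac{1}{(n-1)!^2}\int_0^x (x-t)^{n-1}(y-t)^{n-1}\,dt,
\]
where in the last step the two integrals are equal by the substitution $t\mapsto x+y-t$ (this is exactly the identity used in Lemma~\ref{L:sq}, now as an equality rather than an inequality because here we \emph{add} rather than bound below).

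Next I would evaluate this inner integral in closed form. Substituting $t=sx$ gives
\[
\int_0^x(x-t)^{n-1}(y-t)^{n-1}\,dt=x\int_0^1 x^{n-1}(1-s)^{n-1}(y-sx)^{n-1}\,ds,
\]
and one can expand $(y-sx)^{n-1}$ by the binomial theorem and integrate term by term against $(1-s)^{n-1}s^k$ using the Beta integral $\int_0^1 s^k(1-s)^{n-1}\,ds=k!(n-1)!/(k+n)!$; alternatively, and more cleanly, I would note that $\int_0^x(x-t)^{n-1}(y-t)^{n-1}\,dt$ is a polynomial in $x,y$ that one recognizes, after the substitution $u=x-t$, as $\int_0^x u^{n-1}(u+(y-x))^{n-1}\,du$, again expandable by the binomial theorem. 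Either route yields an explicit polynomial $P_n(x,y)$, and then
\[
\|R_n^2-I_n^2\|_{\HS}^2=\int_0^1\int_0^1 M_n(x,y)^2\,dx\,dy
=\frac{2}{(n-1)!^4}\int_0^1\int_0^y P_n(x,y)^2\,dx\,dy,
\]
using $M_n(x,y)=M_n(y,x)$ and symmetrizing. The remaining double integral is a routine (if somewhat lengthy) computation of a polynomial integral over the triangle $0\le x\le y\le 1$, which should collapse to the claimed value $\frac{1}{4(2n-1)!^2}\cdot\frac{1}{2n(4n-1)}$.

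I expect the main obstacle to be the bookkeeping in the final polynomial integration: carrying the binomial expansion through two integrations and verifying that the resulting sum of products of factorials simplifies to $1/\bigl(4(2n-1)!^2\,2n(4n-1)\bigr)$. A slicker alternative that would avoid this, and which I would try first, is to recognize $M_n(x,y)=\frac{1}{(2n-1)!}\,W_n(x,y)$ where $W_n$ is itself (up to a constant) the kernel of $V^{2n}+V^{*2n}$ restricted appropriately — indeed $\int_0^x(x-t)^{n-1}(y-t)^{n-1}\,dt$ resembles a convolution that, by the semigroup identity behind \eqref{E:powers}, should relate to $(x-y)^{2n-1}$-type expressions; if one can identify $M_n(x,y)$ with a constant times the kernel $\min(x,y)^{?}\cdots$ of a single known operator, then $\|M_n\|_{\HS}$ reduces to a one-line double integral analogous to \eqref{E:doubleint}. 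Concretely, the guess is $M_n(x,y)=c_n\,m^{2n-1}$ with $m=\min(x,y)$ adjusted by the difference $|x-y|$, and matching a single monomial integral $\int_0^1\int_0^1(\cdots)^2$ against the target $\frac{1}{2n(4n-1)}$ pins down $c_n=1/(2n-1)!$. I would verify this identification on $n=1,2$ and then prove it in general by the same substitution $t\mapsto x+y-t$ together with a direct evaluation; this keeps the final step to the level of \eqref{E:doubleint} rather than a multi-term binomial sum.
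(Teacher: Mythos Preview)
There is a sign error at the heart of your computation. The condition $\sign(x-t)\sign(t-y)=1$ holds precisely when $t$ lies \emph{between} $x$ and $y$, not outside: for $x=0$, $y=1$, $t=1/2$, both $x-t$ and $t-y$ are negative. (Incidentally, the exponent on $\sign$ in \eqref{E:Impowers} should be $1$, not $n-1$; with the stated exponent your $n$-odd case yields $(\Im V^n)^2=-(\Re V^n)^2$, which is impossible for squares of self-adjoint operators.) With the correct region, for $x\le y$ the difference kernel is
\[
M_n(x,y)=\frac{1}{2(n-1)!^2}\int_x^y (t-x)^{n-1}(y-t)^{n-1}\,dt
=\frac{1}{2(n-1)!^2}\cdot\frac{(n-1)!^2}{(2n-1)!}\,(y-x)^{2n-1}
=\frac{|x-y|^{2n-1}}{2(2n-1)!},
\]
which is exactly the kernel of $\Re V^{2n}$ from \eqref{E:Repowers}. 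Your $(\int_0^x+\int_y^1)$ produces a genuinely different function of $(x,y)$, and the ``lengthy polynomial integration'' you anticipate would not collapse to the stated value.

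The paper bypasses all kernel bookkeeping with the one-line operator identity
\[
(\Re V^n)^2-(\Im V^n)^2
=\tfrac14\bigl[(V^n+V^{*n})^2+(V^n-V^{*n})^2\bigr]
=\tfrac12\bigl(V^{2n}+V^{*2n}\bigr)
=\Re V^{2n},
\]
and then quotes Theorem~\ref{T:HS} with $n$ replaced by $2n$. Your ``slicker alternative'' was groping toward exactly this identification; writing it at the operator level, rather than guessing kernel shapes involving $\min(x,y)$, turns the whole lemma into a two-line argument.
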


\begin{proof}
Using elementary algebra, we have
\[
(\Re V^n)^2-(\Im V^n)^2=\frac{1}{4}(2V^{2n}+2V^{*2n})=\Re V^{2n}.
\]
The result therefore follows from Theorem~\ref{T:HS},
with $n$ replaced by $2n$.
\end{proof}

\begin{lemma}\label{L:sq2}
For all $n\ge1$, we have
\[
\|(\Im V^n)^2\|_{\HS}\ge 
\frac{1}{2(n-1)!^2}\frac{1}{(2n-1)\sqrt{2n(2n+1)}}
\Bigl(1-\binom{2n-2}{n-1}^{-1}\Bigr).
\]
\end{lemma}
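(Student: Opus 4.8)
The plan is to combine the two previous lemmas by the triangle inequality in the Hilbert--Schmidt norm. Writing $(\Im V^n)^2 = (\Re V^n)^2 - \bigl((\Re V^n)^2-(\Im V^n)^2\bigr)$, we get
\[
\|(\Im V^n)^2\|_{\HS}\ge \|(\Re V^n)^2\|_{\HS}-\|(\Re V^n)^2-(\Im V^n)^2\|_{\HS}.
\]
By Lemma~\ref{L:sq} the first term on the right is at least $\dfrac{1}{2(n-1)!^2}\dfrac{1}{(2n-1)\sqrt{2n(2n+1)}}$, and by Lemma~\ref{L:sqdiff} the second term equals $\dfrac{1}{2(2n-1)!}\dfrac{1}{\sqrt{2n(4n-1)}}$. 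So the whole task reduces to checking that
\[
\frac{1}{2(2n-1)!}\frac{1}{\sqrt{2n(4n-1)}}
\le
\frac{1}{2(n-1)!^2}\frac{1}{(2n-1)\sqrt{2n(2n+1)}}\cdot\binom{2n-2}{n-1}^{-1}.
\]

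The main work is therefore purely a matter of massaging factorials and square roots so that the claimed bound, with its factor $\bigl(1-\binom{2n-2}{n-1}^{-1}\bigr)$, falls out exactly. I would first rewrite $\binom{2n-2}{n-1}^{-1} = \dfrac{(n-1)!^2}{(2n-2)!}$, so that the right-hand side above becomes $\dfrac{1}{2(2n-2)!}\dfrac{1}{(2n-1)\sqrt{2n(2n+1)}} = \dfrac{1}{2(2n-1)!}\dfrac{1}{\sqrt{2n(2n+1)}}$. The desired inequality then collapses to the elementary comparison $\dfrac{1}{\sqrt{4n-1}}\le\dfrac{1}{\sqrt{2n+1}}$, i.e.\ $2n+1\le 4n-1$, which holds for all $n\ge1$. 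Feeding this back, $\|(\Re V^n)^2\|_{\HS}-\|(\Re V^n)^2-(\Im V^n)^2\|_{\HS}$ is bounded below by $\dfrac{1}{2(n-1)!^2}\dfrac{1}{(2n-1)\sqrt{2n(2n+1)}}$ minus $\dfrac{1}{2(n-1)!^2}\dfrac{1}{(2n-1)\sqrt{2n(2n+1)}}\binom{2n-2}{n-1}^{-1}$, which is precisely the asserted lower bound after factoring.

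I do not anticipate a genuine obstacle here: the only slightly delicate point is making sure the binomial-coefficient bookkeeping is done in the right direction, since one wants the \emph{subtracted} term to be as large as the crude triangle-inequality estimate allows, and it is a pleasant accident that $\sqrt{4n-1}\ge\sqrt{2n+1}$ makes the two expressions line up with the clean constant $\binom{2n-2}{n-1}^{-1}$ rather than something messier. An alternative, should one prefer to avoid the triangle inequality, would be to expand the kernel of $(\Im V^n)^2$ directly and mimic the proof of Lemma~\ref{L:sq}, tracking the sign changes coming from the $\sign(x-t)^{n-1}$ factor; but this is more laborious and the subtractive approach above is cleaner, so that is the one I would carry out.
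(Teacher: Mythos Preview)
Your proposal is correct and follows essentially the same route as the paper: apply the triangle inequality to $(\Im V^n)^2 = (\Re V^n)^2 - \bigl((\Re V^n)^2-(\Im V^n)^2\bigr)$, insert the bounds from Lemmas~\ref{L:sq} and~\ref{L:sqdiff}, and simplify the factorials via $\binom{2n-2}{n-1}^{-1}=(n-1)!^2/(2n-2)!$ so that everything reduces to $2n+1\le 4n-1$. The paper presents the same computation in a single chain of displayed inequalities, but the content is identical.
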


\begin{proof}
By Lemmas~\ref{L:sq} and \ref{L:sqdiff} and the triangle inequality,
\begin{align*}
\|(\Im V^n)^2\|_{\HS}
&\ge \frac{1}{2(n-1)!^2}\frac{1}{(2n-1)\sqrt{2n(2n+1)}}
-\frac{1}{2(2n-1)!}\frac{1}{\sqrt{2n(4n-1)}}\\
&= \frac{1}{2(n-1)!^2}\frac{1}{(2n-1)\sqrt{2n(2n+1)}}
\Bigl(1-\frac{(n-1)!^2}{(2n-2)!}\frac{\sqrt{2n+1}}{\sqrt{4n-1}}\Bigr)\\
&\ge \frac{1}{2(n-1)!^2}\frac{1}{(2n-1)\sqrt{2n(2n+1)}}
\Bigl(1-\binom{2n-2}{n-1}^{-1}\Bigr).\qedhere
\end{align*}
\end{proof}

\begin{proof}[Proof of Theorem~\ref{T:opnorm}]
All that is left to do is to insert the information from Theorem~\ref{T:HS} and Lemmas~\ref{L:sq} and \ref{L:sq2}
into \eqref{E:genineq}. The result follows immediately.
\end{proof}

\begin{remarks}
(i) By Stirling's formula 
\[
\binom{2n-2}{n-1}\sim \frac{4^{n-1}}{\sqrt{\pi n}} \quad(n\to\infty).
\]
Thus although the estimate in Theorem~\ref{T:opnorm}
for $\|\Im V^n\|$ is slightly weaker than
that for $\|\Re V^n\|$, the difference for large $n$ is 
extremely small.

(ii) Comparing the result of Theorem~\ref{T:opnorm} with \eqref{E:normineq}, we see that
\[
\|\Re V^n\|\sim\|\Im V^n\|\sim \frac{\|V^n\|}{\sqrt{2}}\sim\frac{1}{(2\sqrt{2})n!} \quad(n\to\infty).
\]
Thus, although the exact analogue of \eqref{E:parallelogram}
fails to hold for the operator norm, it does hold asymptotically.
\end{remarks}


\section{Numerical ranges}\label{S:numranges}

It is known that the numerical range $W(V)$ of the Volterra 
operator is the convex compact set bounded by the vertical segment $[-i/2\pi,\,i/2\pi]$ and the curves
\[
t\mapsto \Bigl(\frac{1-\cos t}{t^2}\Bigr) \pm i\Bigl(\frac{t-\sin t}{t^2}\Bigr) \quad(t\in[0,2\pi]).
\]
This result is folklore. It appears in the middle of a discussion in \cite[p.113--114]{Ha82}, where it is attributed to A.~Brown. 

As for higher powers of $V$, to the best of our knowledge their
numerical ranges have never been identified precisely. However, the projections of $W(V^n)$ onto the $x$- and $y$-axes
are just $W(\Re V^n)$ and $W(\Im V^n)$ respectively
and, in principle at least, computing these is a much easier task. Indeed, 
each of $\Re V^n$ and $\Im V^n$  is a compact self-adjoint operator, so its numerical range is just the real interval obtained by taking the closed convex hull of the eigenvalues. 

For small values of $n$, the results in \S\ref{S:eigenvalues} lead to the intervals listed in Table~\ref{Tb:nr}. 

\begin{table}[ht]
\caption{Numerical ranges of $\Re V^n$ and $\Im V^n$
\newline(Here $\rho\approx1.199678640\ldots$ is the solution to $\coth\rho=\rho$.)}
\label{Tb:nr}
\renewcommand{\arraystretch}{1.8}
\begin{center}
\begin{tabular}{|c|c|c|}\hline
$n$ & $W(\Re V^n)$ & $W(\Im V^n)$\\
\hline 
$1$ & $\Bigl[0,\,1/2\Bigr]$ & $\Bigl[-1/\pi,\,1/\pi\Bigr]$\\
$2$ & $\Bigl[-1/\pi^2,\,1/(4\rho^2)\Bigr]$ & $\Bigl[-\sqrt{3}/12,\,\sqrt{3}/12\Bigr]$\\
$3$ & $\Bigl[-1/24,\,1/48+\sqrt{5}/80\Bigr]$ &--\\
$4$ &--& $\displaystyle\Bigl[-\frac{\sqrt{1575+84\sqrt{345}})}{5040},\,\frac{\sqrt{1575+84\sqrt{345}})}{5040}\Bigr]$\\
[0.2cm]\hline
\end{tabular}
\end{center}
\renewcommand{\arraystretch}{1}
\end{table}

For larger values of $n$, we have the following result.

\begin{theorem}\label{T:nr}
For $n\ge1$, we have
\[
\Bigl[-\frac{n-1}{3(n+3)}\frac{1}{(n+1)!},\,\|\Re V^n\|\Bigr]\subset W(\Re V^n)\subset \Bigl[-\|\Re V^n\|,\,\|\Re V^n\|\Bigr]
\]
and
\[
W(\Im V^n)= \Bigl[-\|\Im V^n\|,\,\|\Im V^n\|\Bigr].
\]
\end{theorem}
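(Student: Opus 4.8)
The statement breaks into three claims: the outer inclusions $W(\Re V^n)\subset[-\|\Re V^n\|,\|\Re V^n\|]$ and similarly for $\Im V^n$ are immediate, since for a compact self-adjoint operator $S$ the numerical range is the closed interval whose endpoints are $\inf\sigma(S)$ and $\sup\sigma(S)$, both bounded in modulus by $\|S\|$. The right endpoint $\|\Re V^n\|\in W(\Re V^n)$ follows because, by Theorem~\ref{T:Perron}, the eigenvalue of largest modulus of $\Re V^n$ is \emph{positive}, hence equals $\|\Re V^n\|$ and lies in the spectrum. So the real content is (a) the lower bound $-\frac{n-1}{3(n+3)}\frac{1}{(n+1)!}$ for the bottom of $W(\Re V^n)$, i.e.\ exhibiting a unit vector $f$ with $\langle \Re V^n f,f\rangle$ at most this value, and (b) showing $W(\Im V^n)$ is \emph{symmetric}, i.e.\ $\inf\sigma(\Im V^n)=-\sup\sigma(\Im V^n)=-\|\Im V^n\|$.

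For (b), the plan is to invoke Theorem~\ref{T:sym}: the eigenvalues of $\Im V^n$ are symmetric about $0$. Hence $\sup\sigma$ and $\inf\sigma$ are negatives of one another, and since the numerical range of a self-adjoint compact operator is the closed convex hull of its eigenvalues (equivalently $[\inf\sigma,\sup\sigma]$), we get $W(\Im V^n)=[-\|\Im V^n\|,\|\Im V^n\|]$ at once. (One should note $\Im V^n\ne 0$, which is clear from \eqref{E:Impowers}.)

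For (a), the idea is to test $\langle \Re V^n f,f\rangle$ against an explicit simple function and compute. A natural choice, given that the sign of the interaction is governed by $|x-t|^{n-1}$ weighting pairs $(x,t)$, is an odd function about the midpoint, e.g.\ $f(x)=x-\tfrac12$ (suitably normalized), or perhaps $f(x)=\sqrt{3}(2x-1)$ which has $\|f\|_2=1$. Using \eqref{E:Repowers},
\[
\langle \Re V^n f,f\rangle=\frac{1}{2(n-1)!}\int_0^1\!\!\int_0^1 |x-t|^{n-1}f(t)f(x)\,dt\,dx,
\]
and for $f(x)=c(2x-1)$ this is a rational multiple of $\int_0^1\!\int_0^1 |x-t|^{n-1}(2t-1)(2x-1)\,dt\,dx$, which can be evaluated in closed form (split on $x\gtrless t$, substitute, integrate by parts in $n$) to produce something like $-\frac{c^2\cdot(\text{const})}{(n+3)!}$ matching the claimed bound after choosing $c^2=3$. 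The computation should be carried out to verify the precise constant $\frac{n-1}{3(n+3)}\frac{1}{(n+1)!}$; I expect the double integral to equal $-\frac{2(n-1)}{(n+1)(n+2)(n+3)}$ up to a factor, so that with the $\frac{1}{2(n-1)!}$ prefactor and $c^2=3$ one lands exactly on the stated value.

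The main obstacle is purely computational: getting the exact value of the double integral $\int_0^1\!\int_0^1 |x-t|^{n-1}(2t-1)(2x-1)\,dt\,dx$ and checking it produces the stated rational function of $n$. There is also the minor point of confirming that the test function is essentially optimal only for small $n$ (it need not be — the bound is just a lower estimate on the spread of the spectrum, not claimed to be sharp), so no optimization over $f$ is required; one explicit choice suffices. Finally, one should double-check the edge case $n=1$: the claimed lower bound is $0$ (since the factor $n-1$ vanishes), consistent with $\Re V\ge 0$ and $W(\Re V)=[0,1/2]$, so the theorem is vacuously tight there and the proof via the test function $f(x)=\sqrt3(2x-1)$ gives $\langle \Re V f,f\rangle=0$ exactly, as it must.
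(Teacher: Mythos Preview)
Your plan is correct and is essentially the paper's own argument: same appeal to Theorems~\ref{T:Perron} and~\ref{T:sym}, and the same test function (the paper uses the unnormalized $f(x)=1-2x$, yours is $-\sqrt{3}$ times this). One computational simplification the paper makes that you might adopt: rather than attacking the double integral with kernel $|x-t|^{n-1}$, it uses $\langle\Re V^n f,f\rangle=\Re\langle V^n f,f\rangle$ and the explicit formula $V^n(1-2x)=x^n/n!-2x^{n+1}/(n+1)!$, reducing to a single elementary integral. Be aware that the unit-vector computation actually gives $\langle\Re V^n f,f\rangle=-\dfrac{3(n-1)}{(n+3)(n+1)!}$, which is nine times more negative than the bound stated in the theorem; so you will not ``land exactly on the stated value'' but on a stronger one, and the stated inclusion then follows a fortiori by convexity.
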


\begin{proof}
The inclusions 
\[
W(\Re V^n)\subset \Bigl[-\|\Re V^n\|,\,\|\Re V^n\|\Bigr]
\quad\text{and}\quad
W(\Im V^n)\subset \Bigl[-\|\Im V^n\|,\,\|\Im V^n\|\Bigr]
\]
are clear.
Moreover, the latter inclusion is actually an equality,
since, by Theorem~\ref{T:sym}, the eigenvalues of $\Im V^n$
are symmetrically distributed about the origin.

By Theorem~\ref{T:Perron}, $\|\Re V^n\|$ is an eigenvalue of $\Re V^n$.
Also, if $f(x):=1-2x$, then $\|f\|_2=1/\sqrt{3}$ and
\[
\langle\Re V^n f,f\rangle=\Re\langle V^nf,\, f\rangle=\int_0^1\Bigl(\frac{x^n}{n!}-2\frac{x^{n+1}}{(n+1)!}\Bigr)(1-2x)\,dx=-\frac{n-1}{n+3}\frac{1}{(n+1)!}.
\]
It follows that both
\[
\|\Re V^n\| \quad\text{and}\quad -\frac{n-1}{3(n+3)}\frac{1}{(n+1)!}
\]
belong to $W(\Re V^n)$, and therefore so do all the points in between, by convexity.
\end{proof}

A Hilbert-space operator $T$ is said to be \emph{accretive} if its numerical range is contained in the half-plane
$\{z\in\CC:\Re z\ge0\}$, or equivalently, if its real part is a positive operator.
Also $T$ is \emph{dissipative} if $-T$ is accretive.

Clearly the Volterra operator $V$ is accretive. This is no longer true of higher powers of $V$.

\begin{corollary}
If $n\ge2$, then $V^n$ is neither accretive nor dissipative.
\end{corollary}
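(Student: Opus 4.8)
The plan is to show that neither $W(\Re V^n)$ nor $W(\Im V^n)$ lies in a half-line $[0,\infty)$ or $(-\infty,0]$, which is exactly the assertion that $V^n$ is neither accretive nor dissipative. First I would dispense with $\Im V^n$: by Theorem~\ref{T:sym} its eigenvalues are symmetric about $0$, and $\Im V^n$ is a nonzero self-adjoint operator, so $W(\Im V^n)=[-\|\Im V^n\|,\|\Im V^n\|]$ with $\|\Im V^n\|>0$. This interval is neither contained in $[0,\infty)$ nor in $(-\infty,0]$, so the real part of $i^{-1}(V^n-V^{*n})/2$ already straddles the origin; equivalently, $W(V^n)$ is not contained in either closed half-plane $\{\Im z\ge0\}$ or $\{\Im z\le0\}$, a fortiori not in $\{\Re z\ge0\}$ or $\{\Re z\le0\}$.

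Actually the cleanest route is to argue directly with $\Re V^n$, which is the object whose positivity (resp.\ negativity) encodes accretivity (resp.\ dissipativity) of $V^n$. By Theorem~\ref{T:nr}, for $n\ge2$ we have
\[
-\frac{n-1}{3(n+3)}\frac{1}{(n+1)!}\in W(\Re V^n)
\quad\text{and}\quad
\|\Re V^n\|\in W(\Re V^n).
\]
Since $n\ge2$, the quantity $(n-1)/\bigl(3(n+3)(n+1)!\bigr)$ is strictly positive, so $W(\Re V^n)$ contains a strictly negative number; and $\|\Re V^n\|>0$ since $\Re V^n\ne0$ (it is a nonzero integral operator, as its kernel $|x-t|^{n-1}/\bigl(2(n-1)!\bigr)$ is not a.e.\ zero). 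Therefore $W(\Re V^n)$ meets both the open positive and open negative half-lines. Because the numerical range of $V^n$ projects onto $W(\Re V^n)$ along the real axis (the projection of $W(V^n)$ onto the $x$-axis is $W(\Re V^n)$), it follows that $W(V^n)$ contains points with $\Re z>0$ and points with $\Re z<0$, so $V^n$ is neither accretive nor dissipative.

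There is essentially no obstacle here: everything needed has already been assembled. The only point requiring a word of care is the strict positivity of $\|\Re V^n\|$ and the strict positivity of the displayed negative eigenvalue bound, both of which are transparent for $n\ge2$ (the latter fails precisely at $n=1$, which is why the hypothesis $n\ge2$ is needed — indeed $V$ itself is accretive). I would simply cite Theorem~\ref{T:nr} for the two membership facts and note the sign observations, giving a proof of two or three lines.
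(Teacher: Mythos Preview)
Your second paragraph is correct and is exactly the paper's argument: Theorem~\ref{T:nr} gives a strictly positive and (for $n\ge 2$) a strictly negative element of $W(\Re V^n)$, so $\Re V^n$ is neither positive nor negative, hence $V^n$ is neither accretive nor dissipative.

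However, the first paragraph contains a genuine error that should be excised. From the fact that $W(\Im V^n)=[-\|\Im V^n\|,\|\Im V^n\|]$ straddles $0$ you correctly conclude that $W(V^n)$ is not contained in either half-plane $\{\Im z\ge 0\}$ or $\{\Im z\le 0\}$. But the subsequent ``a fortiori not in $\{\Re z\ge 0\}$ or $\{\Re z\le 0\}$'' is simply false: containment in an imaginary half-plane and containment in a real half-plane are independent conditions. For a counterexample, any accretive operator $T$ with $\Im T\ne 0$ (indeed $V$ itself) has $W(T)\subset\{\Re z\ge 0\}$ while $W(\Im T)$ straddles $0$. So the $\Im V^n$ discussion proves nothing about accretivity and should be dropped entirely; only the $\Re V^n$ argument is relevant, and you already have it.
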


\begin{proof}
As we have just shown in Theorem~\ref{T:nr}, 
if $n\ge2$ then $W(\Re V^n)$
contains both strictly positive and strictly negative values.
\end{proof}

A natural next step is to consider the accretivity of more general polynomials $p(V)$ of the Volterra operator. In principle, this amounts to determining whether the eigenvalues of $\Re p(V)$ are all non-negative. In practice this may be rather complicated. For quadratic polynomials, the calculations are similar to those for $\Re V^2$ already carried out in the proof of Theorem~\ref{T:ev2}. 

We record here one special case that is particularly simple, and for which eigenvalue calculations are not needed.

\begin{theorem}\label{T:accretive}
Let $a,b\in\RR$. Then $aV+bV^2$ is accretive if and only if 
$b\le 0$ and $2a+b\ge0$.
\end{theorem}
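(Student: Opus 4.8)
The plan is to compute the real part $\Re(aV+bV^2) = a\Re V + b\Re V^2$ and determine exactly when it is a positive operator. Since this is a self-adjoint integral operator, positivity is equivalent to $\langle \Re(aV+bV^2)f,f\rangle \ge 0$ for all $f\in L^2[0,1]$. First I would record from \eqref{E:Repowers} that $\Re V$ has kernel $\tfrac12$ (the constant function), so $\Re V = \tfrac12 P$ where $Pf = \langle f,1\rangle 1$ is (twice nothing but) the rank-one projection onto constants; concretely $\langle \Re V f,f\rangle = \tfrac12|\int_0^1 f|^2 \ge 0$. Similarly $\Re V^2$ has kernel $\tfrac12|x-t|$. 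The key reduction is to write everything in terms of the primitive $g = Vf$, equivalently $g' = f$ with $g(0)=0$: then $\int_0^1 f = g(1)$, and an integration-by-parts computation should express $\langle \Re V^2 f,f\rangle$ cleanly in terms of $g$. Differentiating $\Re V^2 f$ twice (as in the proof of Theorem~\ref{T:ev2}) gives $(\Re V^2 f)'' = f$, and combined with the boundary data this should lead to an identity of the shape $\langle \Re V^2 f,f\rangle = \tfrac12 g(1)\bigl(\text{something}\bigr) - \int_0^1 (g')^2$ or similar; I would push this through carefully to get $\langle \Re V^2 f,f\rangle = \tfrac12 |g(1)|^2 - \int_0^1 |g'|^2$, perhaps up to checking signs, using that $\int_0^1\int_0^1 |x-t|\,\overline{f(x)}f(t)\,dx\,dt$ can be evaluated by splitting $|x-t|$ and integrating by parts in each half.

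Granting such a formula, I would then have
\[
\langle \Re(aV+bV^2)f,f\rangle = \tfrac{a}{2}|g(1)|^2 + \tfrac{b}{2}|g(1)|^2 - b\int_0^1 |g'|^2 = \tfrac{a+b}{2}\,\Bigl(\text{correction}\Bigr)\ldots
\]
(the precise combination to be pinned down), and the problem becomes: for which $(a,b)$ is the quadratic form $Q(g) := \alpha|g(1)|^2 + \beta\int_0^1|g'|^2 \ge 0$ over all absolutely continuous $g$ with $g(0)=0$ and $g'\in L^2$? Here one direction is easy: testing with $g\equiv$ linear, or with $g$ supported away from $1$ so that $g(1)=0$ but $\int|g'|^2>0$, forces $\beta\ge 0$, i.e.\ $b\le 0$; and testing with $g(x)=x$ gives the constraint on $\alpha$, i.e.\ $2a+b\ge 0$. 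For the converse — that $b\le0$ and $2a+b\ge0$ suffice — the hard part will be the inequality controlling $\int_0^1|g'|^2$ from below by $|g(1)|^2$: one needs $|g(1)|^2 = \bigl|\int_0^1 g'\bigr|^2 \le \int_0^1 |g'|^2$ by Cauchy--Schwarz, which handles the borderline; combining this Poincaré-type bound with the two sign conditions should close the argument. I would organize it as: (1) derive the exact quadratic-form identity; (2) extract necessity by two explicit test functions; (3) prove sufficiency via Cauchy--Schwarz on $g(1)=\int_0^1 g'$.

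The main obstacle I anticipate is step~(1): getting the integration-by-parts bookkeeping for $\langle \Re V^2 f,f\rangle$ exactly right, including the boundary terms at $x=0$ and $x=1$ and the factor conventions, so that the resulting form is genuinely $\alpha|g(1)|^2 - |b|\|g'\|^2$ with no leftover cross terms. Once that identity is clean, steps~(2) and~(3) are short. An alternative to step~(1), if the direct computation is messy, is to diagonalize: the form $\int_0^1\int_0^1|x-t|\overline{f(x)}f(t)$ is, up to constants, related to $-\langle (\text{second-derivative operator})^{-1}f,f\rangle$ with appropriate boundary conditions, but I expect the hands-on integration by parts to be the cleanest route to present.
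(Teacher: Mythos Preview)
Your overall strategy---derive a quadratic-form identity for $\Re\langle(aV+bV^2)f,f\rangle$, then extract necessity by test functions and sufficiency from the sign structure---is exactly what the paper does. But the specific identity you anticipate is wrong, and this affects both the sufficiency and necessity steps.

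If you carry out your integration by parts with $g=Vf$ you will \emph{not} get a form in $|g(1)|^2$ and $\int_0^1|g'|^2=\|f\|^2$; no such identity exists (try $f=1$ and $f=\cos\pi x$ and compare). What actually drops out is
\[
\langle \Re V^2 f,f\rangle \;=\; g(1)\int_0^1 g \;-\;\int_0^1 |g|^2 \;=\; \tfrac14|g(1)|^2 \;-\;\int_0^1\bigl|g-\tfrac{g(1)}{2}\bigr|^2,
\]
and since $g-\tfrac{g(1)}{2}=\tfrac12(Vf-V^*f)=i(\Im V)f$ and $\tfrac12 g(1)=(\Re V)f$, this is precisely the paper's identity $\Re\langle V^2f,f\rangle=\|(\Re V)f\|^2-\|(\Im V)f\|^2$, obtained there in one line from polarization of $\langle Vf,V^*f\rangle$. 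Combined with $\Re\langle Vf,f\rangle=2\|(\Re V)f\|^2$ this gives
\[
\Re\langle(aV+bV^2)f,f\rangle=(2a+b)\,\|(\Re V)f\|^2 \;-\; b\,\|(\Im V)f\|^2.
\]

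Two consequences. First, sufficiency is now immediate: if $2a+b\ge0$ and $b\le0$ then both summands are nonnegative, so your planned Cauchy--Schwarz step is not needed (and indeed would not apply, since the second quantity is $\|(\Im V)f\|^2$, not $\|f\|^2$). Second, for necessity of $2a+b\ge0$ your single test $g(x)=x$ (i.e.\ $f\equiv1$) does not isolate that coefficient: it yields only $3(2a+b)\ge b$, which given $b\le0$ is weaker than $2a+b\ge0$. You need a sequence with $\|(\Im V)f_n\|\to0$ while $\|(\Re V)f_n\|$ stays bounded below; the paper uses $f_n=n\,1_{[0,1/n]}+n\,1_{[1-1/n,1]}$. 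The test for $b\le0$ (any $f$ with $\int_0^1 f=0$, e.g.\ $f(x)=x-\tfrac12$) works as you planned.
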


\begin{proof}
Note that $\Re V$ is the rank-one operator given by
\[
\Re Vf=\frac{1}{2}\Bigl(\int_0^1 f\Bigr)1 \quad(f\in L^2[0,1]).
\]
Hence, for all $f\in L^2[0,1]$, we have
\[
\Re \langle Vf,f\rangle=\langle \Re Vf,f\rangle=\frac{1}{2}\Big|\int_0^1 f\Bigr|^2=2\|(\Re V)f\|_2^2.
\]
Also
\begin{align*}
\Re \langle V^2f,f\rangle
&=\frac{1}{2}\Bigl(\langle Vf,V^*f\rangle+\langle V^*f,Vf\rangle\Bigr)\\
&=\frac{1}{4}\Bigl(\|(V+V^*)f\|_2^2-\|(V-V^*)f\|_2^2\Bigr)\\
&=\|(\Re V)f\|_2^2-\|(\Im V)f\|_2^2.
\end{align*}
Thus, if $a,b\in\RR$, then
\[
\Re \Bigl\langle (aV+bV^2)f,\,f\Bigr\rangle
=(2a+b)\|(\Re V)f\|_2^2-b\|(\Im V)f\|_2^2
\quad(f\in L^2[0,1]).
\]
In particular, $aV+bV^2$ is accretive if and only if
\begin{equation}\label{E:accretive}
(2a+b)\|(\Re V)f\|_2^2-b\|(\Im V)f\|_2^2\ge0
\quad(f\in L^2[0,1]).
\end{equation}

Taking $f(x):=(x-1/2)$, we have 
\[
\|(\Re V)f\|_2=\frac{1}{2}\Bigl|\int_0^1 f\Bigr|=0
\qquad\text{and}\qquad
\|(\Im V)f\|_2=(1/2)\|x^2-x\|_2>0.
\]
Substituting this information into \eqref{E:accretive}, 
we see that  \eqref{E:accretive} implies that 
$b\le0$. 

Also, taking $f_n:=n1_{[0,\,1/n]}+n 1_{[1-1/n,\,1]}$, we have
\[
\|(\Re V)f_n\|_2=\frac{1}{2}\Bigl|\int_0^1 f_n\Bigr|=1
\qquad\text{and}\qquad
\bigl\|(\Im V)f_n\bigr\|_2^2\le 2/n.
\]
(To see the last inequality, observe that $|(\Im V)f_n(x)|\le 1$ for all $x\in[0,1]$ and that
 $(\Im V)f_n(x)=0$ if $x\in[1/n,1-1/n]$.) Substituting these $f_n$ into \eqref{E:accretive}, and letting $n\to\infty$, we see that  \eqref{E:accretive} implies that 
$2a+b\ge0$. 

To summarize, we have shown that, if $aV^2+bV$ is accretive, then
we have $b\le0$ and $2a+b\ge0$. Conversely, if both these inequalities hold, then \eqref{E:accretive} clearly implies that $aV+bV^2$ is accretive.
\end{proof}

We end with a simple application of Theorem~\ref{T:accretive}.
It is a classic fact that $\|(I+aV)^{-1}\|\le 1$ for all $a\ge0$.
This is proved in \cite[p.302]{Ha82}  directly from the fact that $V$ is accretive. 
Theorem~\ref{T:accretive} thus permits us to deduce the following generalization.

\begin{corollary}
If $a\ge -b/2\ge0$, then $\|(I+aV+bV^2)^{-1}\|\le1$.
\end{corollary}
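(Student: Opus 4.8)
The plan is to reduce the corollary immediately to Theorem~\ref{T:accretive} together with the standard resolvent estimate for accretive operators. First I would observe that the hypothesis $a\ge -b/2\ge0$ is merely a rephrasing of the two inequalities appearing in Theorem~\ref{T:accretive}: the condition $-b/2\ge0$ says exactly that $b\le0$, while $a\ge -b/2$ rearranges to $2a+b\ge0$. Hence, by Theorem~\ref{T:accretive}, the operator $T:=aV+bV^2$ is accretive, that is, $\Re\langle Tf,f\rangle\ge0$ for every $f\in L^2[0,1]$.

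Next I would invoke the elementary fact that an accretive operator $T$ satisfies $\|(I+T)f\|_2\ge\|f\|_2$ for all $f$. This follows at once from expanding
\[
\|(I+T)f\|_2^2=\|f\|_2^2+2\Re\langle Tf,f\rangle+\|Tf\|_2^2\ge\|f\|_2^2,
\]
where accretivity lets us discard the middle term and positivity of the norm lets us discard the last one. This is precisely the computation used in \cite[p.302]{Ha82} to handle $I+aV$; here it applies verbatim with $aV$ replaced by $aV+bV^2$.

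Finally I need to know that $I+T$ is genuinely invertible, so that the inequality above upgrades to the resolvent bound $\|(I+T)^{-1}\|\le1$. Here I would use that $T=aV+bV^2$ is a polynomial in $V$ with zero constant term; since $V$ is quasinilpotent, so is $T$, whence its spectral radius is $0$ and the Neumann series $\sum_{k\ge0}(-T)^k$ converges in operator norm to a bounded inverse of $I+T$ on $L^2[0,1]$. Writing $g=(I+T)f$ in $\|(I+T)f\|_2\ge\|f\|_2$ then gives $\|g\|_2\ge\|(I+T)^{-1}g\|_2$ for all $g$, which is exactly $\|(I+aV+bV^2)^{-1}\|\le1$.

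There is essentially no obstacle here; the only step meriting a word of justification is the invertibility of $I+T$, and even that is routine given the quasinilpotence of $V$. (Alternatively, once one has $\|(I+T)f\|_2\ge\|f\|_2$, the operator $I+T$ is bounded below and hence has closed range, and density of the range — equivalently injectivity of $(I+T)^*=I+T^*$, where $T^*=aV^*+bV^{*2}$ is also quasinilpotent — follows by the same argument applied to $T^*$.)
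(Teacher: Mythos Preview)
Your proof is correct and follows exactly the route the paper indicates: rewrite the hypothesis as $b\le0$ and $2a+b\ge0$, invoke Theorem~\ref{T:accretive} to get accretivity of $T=aV+bV^2$, and then apply the standard estimate from \cite[p.302]{Ha82} (which you have spelled out in full, including the invertibility of $I+T$ via quasinilpotence). The paper gives no separate proof of the corollary beyond pointing to these two ingredients, so your write-up is simply a more detailed version of the same argument.
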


\bigskip




\bigskip

\bibliographystyle{amsplain}
\bibliography{biblist.bib}

\providecommand{\bysame}{\leavevmode\hbox to3em{\hrulefill}\thinspace}
\providecommand{\MR}{\relax\ifhmode\unskip\space\fi MR }
\providecommand{\MRhref}[2]{%
  \href{http://www.ams.org/mathscinet-getitem?mr=#1}{#2}
}
\providecommand{\href}[2]{#2}
\begin{thebibliography}{1}

\bibitem{AA02}
Y.~A. Abramovich and C.~D. Aliprantis, \emph{An invitation to operator theory},
  Graduate Studies in Mathematics, vol.~50, American Mathematical Society,
  Providence, RI, 2002. \MR{1921782}

\bibitem{BD09}
A.~B\"{o}ttcher and P.~D\"{o}rfler, \emph{On the best constants in inequalities
  of the {M}arkov and {W}irtinger types for polynomials on the half-line},
  Linear Algebra Appl. \textbf{430} (2009), no.~4, 1057--1069. \MR{2489378}

\bibitem{Ha82}
P.~R. Halmos, \emph{A {H}ilbert space problem book}, second ed., Encyclopedia
  of Mathematics and its Applications, vol.~17, Springer-Verlag, New
  York-Berlin, 1982. \MR{675952}

\bibitem{LW97}
N.~Lao and R.~Whitley, \emph{Norms of powers of the {V}olterra operator},
  Integral Equations Operator Theory \textbf{27} (1997), no.~4, 419--425.
  \MR{1442126}

\end{thebibliography}

\end{document}